\def\cal{\mathcal}
\def\Bbb{\mathbb}
\newenvironment{NB}{
\color{red}{\bf NB}. \footnotesize 
}{}
\newenvironment{NB2}{
\color{blue}{\bf NB}. \footnotesize
}{}
\newcommand{\Ext}{\operatorname{Ext}}
\newcommand{\Hom}{\operatorname{Hom}}
\newcommand{\im}{\operatorname{im}}
\newcommand{\rk}{\operatorname{rk}}
\newcommand{\NS}{\operatorname{NS}}
\newcommand{\Pic}{\operatorname{Pic}}
\newcommand{\ch}{\operatorname{ch}}
\newcommand{\td}{\operatorname{td}}
\newcommand{\Hilb}{\operatorname{Hilb}}
\newcommand{\Coh}{\operatorname{Coh}}
\newcommand{\Div}{\operatorname{Div}}
\newcommand{\topo}{\operatorname{top}}
\font\b=cmr10 scaled \magstep5
\def\bigzerou{\smash{\lower1.7ex\hbox{\b 0}}}
\numberwithin{equation}{section}
\theoremstyle{plain}
 \newtheorem{thm}{Theorem}[section]
 \newtheorem{lem}[thm]{Lemma}
 \newtheorem{prop}[thm]{Proposition}
 \newtheorem{cor}[thm]{Corollary}
\theoremstyle{definition}
 \newtheorem{defn}[thm]{Definition}
\theoremstyle{remark}
 \newtheorem{rem}[thm]{Remark}
 \newtheorem{ex}[thm]{Example}
\begin{document}

\title{Some moduli spaces of 1-dimensional sheaves on 
an elliptic ruled surface.}
\author{K\={o}ta Yoshioka}
\address{Department of Mathematics, Faculty of Science,
Kobe University,
Kobe, 657, Japan
}
\email{yoshioka@math.kobe-u.ac.jp}

\thanks{
The author is supported by the Grant-in-aid for 
Scientific Research (No. 18H01113), JSPS}
\keywords{elliptic ruled surfaces, stable sheaves}

\begin{abstract}
We shall study moduli spaces of stable 1-dimensional sheaves on an elliptic ruled
surface.
\end{abstract}

\maketitle

\renewcommand{\thefootnote}{\fnsymbol{footnote}}
\footnote[0]{2010 \textit{Mathematics Subject Classification}. 
Primary 14D20.}

\section{Introduction}
Let $X$ be a smooth projective surface over ${\Bbb C}$.
Moduli spaces of stable sheaves of rank $r$ on $X$ are studied by many people.
In particular, by analysing the chamber structure on the ample cone,
topological invariants of the moduli spaces are extensively studied if $X$ is a surface 
of Kodaira dimension $-\infty$ and $r>0$ (cf. \cite{G1}, \cite{G2},
\cite{Man1}, \cite{Man2}, \cite{Mo}, \cite{Y:1}, \cite{Y:2}, \cite{Y:half-K3}). 
For the moduli spaces of stable 1-dimensional sheaves on surfaces,
topological properties are also studied for
${\Bbb P}^2$, a quadratic surface and an elliptic surface.
 In this note, we shall compute the Hodge numbers of some
moduli spaces on an elliptic ruled surface, that is,
a ${\Bbb P}^1$-bundle over an elliptic curve $C$. 



Let $\varpi:X \to C$ be the structure morphism of the ${\Bbb P}^1$-fibration.
Let $g$ be a fiber of $\varpi$ and $C_0$ a minimal section.
Then $\NS(X)={\Bbb Z}C_0+ {\Bbb Z}g$ with 
$(C_0 \cdot g)=1$, $(g^2)=0$ and
$(C_0^2)=-e$.
The canonical divisor
$K_X=-2C_0-e g$ satisfies $(K_X^2)=0$ and $K_X$ 
is nef if and only if $e=0,-1$.
In each cases, the nef cone is generated by $g$ and $-K_X$ \cite[Prop. 2.20, 2.21]{H}.
Let $M_H(0,\xi,\chi)$ be the moduli space of stable 1-dimensional sheaves $E$
such that $c_1(E)=\xi$ and $\chi(E)=\chi$.

\begin{thm}\label{thm:e=-1}
Let $\varpi:X \to C$ be an elliptic ruled surface with $e=-1$.
Let $(\xi,\chi)$ be a pair of $\xi \in \NS(X)$ with $(\xi \cdot K_X)<0$ 
and $\chi \in {\Bbb Z}$ with $\chi \ne 0$. 
\begin{enumerate}
\item[(1)]
Assume that $H$ is a general polarization. Then
$M_H(0,\xi,\chi)$ is a smooth projective manifold of dimension
$(\xi^2)+1$.
\item[(2)]
The Hodge numbers of $M_H(0,\xi,\chi)$ is independent
of a general choice of $H$.
If $(\xi \cdot K_X)=-1$, then the generating function is given by
\begin{equation}
\begin{split}
& \sum_{-(\xi \cdot K_X)=1}  
\left(\sum_{p,q} (-1)^{p+q} h^{p,q}(M_H(0,\xi,\chi))x^p y^q \right) q^{\frac{(\xi^2)}{4}} \\
=& (x-1)^2 (y-1)^2 q^\frac{1}{4}
\prod_{n>0}
\frac{(1-x^{-1}(x^2 y^2 q)^n)^2(1-y^{-1}(x^2 y^2 q)^n)^2(1-x(x^2 y^2 q)^n)^2(1-y(x^2 y^2 q)^n)^2}
{(1-(xy)^{-1}(x^2 y^2 q)^{\frac{n}{2}})(1-(x^2 y^2 q)^{\frac{n}{2}})^2(1-(xy)(x^2 y^2 q)^{\frac{n}{2}})}.
\end{split}
\end{equation}
\end{enumerate}
\end{thm}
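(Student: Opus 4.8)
The plan is to prove (1) by deformation theory, and (2) in two stages: invariance of the Hodge numbers by a wall-crossing argument, and the closed formula by combining an explicit fibration model of the moduli space with a symmetric-product/Göttsche-type resummation.

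For (1) I would run the standard argument. At a stable point $[E]$ the tangent space of $M_H(0,\xi,\chi)$ is $\Ext^1(E,E)$ and the obstructions lie in $\Ext^2(E,E)$, so it suffices to show $\Ext^2(E,E)=0$. By Serre duality $\Ext^2(E,E)\cong\Hom(E,E\otimes K_X)^{\vee}$. Since $E$ is $1$-dimensional with $c_1(E)=\xi$, Riemann--Roch gives $\chi(E\otimes K_X)=\chi(E)+(\xi\cdot K_X)$; as $(\xi\cdot K_X)<0$ the stable sheaves $E$ and $E\otimes K_X$ share the support class $\xi$ but satisfy $\mu_H(E\otimes K_X)<\mu_H(E)$, and a nonzero map between stable sheaves of strictly decreasing reduced Hilbert polynomial cannot exist, so $\Hom(E,E\otimes K_X)=0$ and $\Ext^2(E,E)=0$. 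Thus $M_H$ is smooth, and since a general $H$ makes semistability coincide with stability the coarse moduli space is projective (and fine when the invariants are coprime, e.g. when $(\xi\cdot K_X)=-1$). The dimension is $\dim\Ext^1(E,E)=\dim\Hom(E,E)+\dim\Ext^2(E,E)-\chi(E,E)=1+0-\chi(E,E)$, and the rank-$0$ Riemann--Roch pairing gives $\chi(E,E)=-(\xi^2)$, so $\dim M_H=(\xi^2)+1$.

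For the invariance statement in (2) I would use the chamber decomposition of the $2$-dimensional ample cone, which by the cited nef-cone description is spanned by $g$ and $-K_X$. A general $H$ lies in the interior of a chamber, and two such polarizations are joined by a path crossing finitely many walls. On a wall strictly semistable sheaves appear as extensions $0\to E_1\to E\to E_2\to 0$ of stable factors of equal reduced Hilbert polynomial with $\xi_1+\xi_2=\xi$; crossing the wall exchanges these for the opposite extensions, a flop whose two centres are the projective bundles $\mathbb{P}(\Ext^1(E_2,E_1))$ and $\mathbb{P}(\Ext^1(E_1,E_2))$ over $M(\xi_1)\times M(\xi_2)$. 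The vanishing $\Ext^2=0$ (again forced by $(\xi_i\cdot K_X)\le 0$) makes these centres smooth of the expected dimension, and a direct comparison of the two blow-ups shows $h^{p,q}(M_H)$ is unchanged across the wall, giving the asserted independence.

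For the generating function I would exploit two natural fibrations of $M_H(0,\xi,\chi)$. The Fitting-support map sends $E$ to its support divisor and exhibits $M_H$ as a compactified-Jacobian (abelian) fibration over the space $\Div^{\xi}(X)$ of effective divisors in the class $\xi$; since $\Pic^0(X)\cong\Pic^0(C)$ is an elliptic curve and $\chi(\mathcal{O}_X(\xi))=\tfrac12((\xi^2)+1)$ with $h^2=0$ (as $-K_X$ is nef) and $h^1=0$ for generic $L$, this base is generically a $\mathbb{P}^{((\xi^2)-1)/2}$-bundle over $C$. Equivalently, pushing forward along the ruling $\varpi$ sends $E$ to a rank-$(\xi\cdot g)$ bundle on $C$ and maps $M_H$ to a moduli space of semistable bundles on the elliptic curve, which by Atiyah--Tu is a symmetric product of $C$. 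I expect the Hodge--Deligne polynomial to be assembled from the Hodge structure of the abelian fibres over this base, and the sum over all $\xi$ with $(\xi\cdot K_X)=-1$, graded by $q^{(\xi^2)/4}$, to collapse into the stated infinite product through the symmetric-product generating-function formalism for $C$: the denominator building blocks should come from the motivic zeta function of the genus-$1$ base, the half-integer exponents $q^{n/2}$ reflect the genus-$1$ fibres, and the squared numerator reflects the doubling due to the $\mathbb{P}^1$-ruling.

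The main obstacle is this last step. Putting the series into the displayed closed form requires controlling the contributions of the singular and non-reduced support curves -- that is, the singular fibres of the support map -- to the Hodge--Deligne polynomial, and then resumming over all $\xi$. I would attack this either via the decomposition theorem together with $\chi$-independence for the support map, or by a Fourier--Mukai identification of $M_H$ with a Hilbert scheme of points on an auxiliary surface, after which G\"ottsche's formula produces a product of exactly this shape. Matching the precise weight shifts and the prefactor $(x-1)^2(y-1)^2q^{1/4}$ is the delicate part; as a consistency check the $\xi=C_0$ term forces $E(M_H;x,y)=(x-1)^2(y-1)^2$, so that $M_H(0,C_0,\chi)$ is Hodge-theoretically an abelian surface, in agreement with its description as the relative Picard scheme of the universal section over $C$.
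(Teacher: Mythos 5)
Your proof of (1) has a genuine gap at its central step. You deduce $\Hom(E,E\otimes K_X)=0$ by treating $E$ and $E\otimes K_X$ as stable sheaves whose reduced slopes strictly decrease, but tensoring a $1$-dimensional sheaf by $K_X$ does \emph{not} preserve stability: a subsheaf $F\subset E$ becomes $F(K_X)\subset E(K_X)$ with $\chi(F(K_X))=\chi(F)+(c_1(F)\cdot K_X)$, a shift that depends on the support class of $F$ (it is $0$ for $F$ supported on fibers of the elliptic fibration, while $\chi(E(K_X))=\chi(E)-1$), so $E(K_X)$ can be destabilized and the standard ``no maps from bigger to smaller slope'' lemma does not apply. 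The tell-tale sign is that your argument uses only $(\xi\cdot K_X)<0$ and never the hypothesis $e=-1$, i.e.\ the nefness of $-K_X$; but the Example following Proposition \ref{prop:smooth} in the paper exhibits, on ruled surfaces with $e>0$, stable $1$-dimensional sheaves $E$ with $(K_X\cdot \Div(E))<0$ and $\Ext^2(E,E)\neq 0$, so no argument avoiding nefness can be correct. The paper's Lemma \ref{lem:vanish} repairs exactly this point: for a nonzero $\varphi\colon E\to E(K_X)$ one passes to the image $G=\varphi(E)$, uses semistability of source and target to trap the slope of $G$, and invokes nefness of $-K_X$ to get $(c_1(G)\cdot K_X)\le 0$; the forced equalities then contradict stability together with $(\xi\cdot K_X)\neq 0$. (Your wall-crossing sketch for the $H$-independence in (2) has the same hidden dependence: the $\Ext^2$-vanishing for the stable factors at a wall again needs this nefness-based lemma, which is how the paper's Proposition \ref{prop:chamber0} proceeds.)

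For the closed formula in (2) — the actual content of the theorem — your proposal does not contain a proof: you list candidate strategies (decomposition theorem over $\Hilb_X^\xi$, pushforward to moduli of bundles on $C$, Fourier--Mukai to a Hilbert scheme plus G\"ottsche's product) and explicitly concede that the resummation and the singular/non-reduced support curves are an unresolved obstacle. Moreover, the one concrete mechanism you name is the wrong one for this parity: since $(\xi\cdot K_X)=-1$ means the support is a \emph{double} cover of ${\Bbb P}^1$, Bridgeland's transform along $\pi\colon X\to{\Bbb P}^1$ identifies $M_H(0,\xi,\chi)$ with a moduli space of \emph{rank-two} sheaves on $X$ (Proposition \ref{prop:FM-isom}); the identification with $\Hilb_X^n\times \Pic^0(X)$ occurs only in the odd case (Theorem \ref{thm:rank}(2)). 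The paper then feeds in the known rank-$2$ generating functions \eqref{eq:e-poly1}--\eqref{eq:e-poly2}, whose essential feature is an indefinite theta-type double sum with signs (cf.\ \eqref{eq:r=2}), and converts that sum into the displayed product via the G\"ottsche--Zagier identity (Lemma \ref{lem:product-formula}). That indefinite-theta structure cannot be produced by a purely symmetric-product/Hilbert-scheme formalism, which is why your route, as proposed, cannot reach the stated product; your consistency check for $\xi=C_0$ is correct but does not substitute for the computation.
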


The smoothness of $M_H(0,\xi,\chi)$ is an easy consequence of 
the deformation theory of a coherent sheaf. 
The computation of the Hodge numbers is our main result. 
We note that there is an elliptic fibration $\pi:X \to {\Bbb P}^1$ with three multiple fibers
of multiplicity 2.
Then the assumption $(\xi \cdot K_X)=-1$ means that 
the support $D$ of $E \in M_H(0,\xi,\chi)$ is a double cover of ${\Bbb P}^1$.
We shall also treat the case where $e=0$ (Theorem \ref{thm:e=0}). 

For the proof, we shall use Fourier-Mukai transform assocaited to the elliptic fibration
\cite{Br:1}.
Since  $D$ is a double cover of ${\Bbb P}^1$, 
the computation is reduced to the computation of Hodge numbers of the moduli spaces
of stable sheaves of rank two, which is computed in 
 \cite{G1} or \cite{Y:2}.
We also use indefinite theta function in \cite{GZ} to get the product expression.
Since the Betti numbers of moduli spaces for higher rank cases are computed in \cite{Mo},
it is possible to get the Betti numbers of $M_H(0,\xi,a)$ for a general $\xi$ in principle. 

We would like to remark that
the same method works for a 9 points blow-ups $X$ of ${\Bbb P}^2$.
Thus if $-K_X$ is nef, then
by using Fourier-Mukai transforms on a rational elliptic surface
and the deformation invariance of the Hodge numbers,
we can compute the Hodge numbers from the computations
for positive rank cases.  In particular we can derive an explicit form
of Euler charactersitics of $M_H(0,\xi,\chi)$ 
from the computations in \cite{Y:half-K3}, where $(\xi \cdot K_X)=-2$.

\section{Preliminaries}

{\it Notation.}
Let $X$ be a smooth projective surface.
For two divisors $D_1,D_2$ on $X$,
$D_1 \equiv D_2$ means $D_1$ is algebraically equivalent to $D_2$.
$(D_1 \cdot D_2)$ denotes the intersection number of $D_1, D_2$. We set
$(D_1^2):=(D_1 \cdot D_1)$.

For a smooth projective variety $X$,
${\bf D}(X):={\bf D}(\Coh(X))$ denotes the bounded derived category of 
the category $\Coh(X)$ of 
coherent sheaves on $X$.
For $E \in {\bf D}(X)$, $E^{\vee}:={\bf R}{\cal H}om_{{\cal O}_X}(E,{\cal O}_X)$
denotes the derived dual of $E$. 
For the Grothendieck group $K(X)$ of $X$,
we set $K(X)_{\topo}:=K(X)/\ker \ch$, where
$\ch:K(X) \to H^*(X,{\Bbb Q})$ is the Chern character map.

For an algebraic set $Y$, $e(Y):=\sum_{p,q} \sum_k (-1)^k h^{p,q}(H^k_c(Y,{\Bbb Q}))x^p y^q$
denotes the virtual Hodge polynomial of $Y$.
If $Y$ is a smooth projective manifold, then
$e(Y)$ is the Hodge polynomial of $Y$.

Assume that $\varpi:X \to C$ is an elliptic ruled surface. Thus $C$ is an elliptic curve and 
$\varpi$ is a ${\Bbb P}^1$-bundle morphism.
$C_0$ denotes a minimal section of $\varpi$ and $g$ a fiber of $\varpi$. 
We set $e:=-(C_0^2)$.
Then we have $(g^2)=0, (g \cdot C_0)=1$ and $(C_0^2)=-e$.

\subsection{Basic facts.}
Let $X$ be a smooth projective surface.
Let $H$ be an ample divisor on $X$ and $\alpha$ a ${\Bbb Q}$-divisor
on $X$.
For a coherent sheaf $E$ on $X$,
an $\alpha$-twisted Euler characteristic $\chi_\alpha(E)$ of $E$
is defined by
$\chi(E(-\alpha))=\int_X \ch(E) e^{-\alpha} \td_X$.
Matsuki and Wentworth \cite{M-W}
defined the $\alpha$-twisted stability
of a torsion free sheaf $E$ by using twisted Hilbert polynomial
$\chi_\alpha(E(nH))$. It is generalized to 1-dimensional sheaf in \cite{Y:twist2}. 
For $(r,\xi,\chi) \in {\Bbb Z}\oplus \NS(X) \oplus {\Bbb Z}$,
$M_H^\alpha(r,\xi,\chi)$ denotes the moduli space of $\alpha$-twisted stable sheaves $E$ on $X$
with $(\rk E,c_1(E),\chi(E))=(r,\xi,\chi)$ and 
$\overline{M}_H^\alpha(r,\xi,\chi)$ the projective
compactification by adding 
$S$-equivalence classes of $\alpha$-twisted semi-stable sheaves
(see \cite{M-W} for $r>0$ and \cite[Thm. 4.7]{Y:twist2} for $r=0$).

\begin{NB}
For a pair $(H,\alpha)$ of an ample divisor $H$ and a ${\Bbb Q}$-divisor $\alpha$,
let ${\cal M}_H^\alpha({\bf e})^{ss}$ be the moduli stack of $\alpha$-twisted semi-stable sheaves
$E$ with $\tau(E)$ and ${\cal M}_H^\alpha({\bf e})^s$ the  substack of $\alpha$-twisted stable sheaves.

Let $e({\cal M}_H^\alpha({\bf e})^{ss})$ be the virtual Hodge polynomial of
${\cal M}_H^\alpha({\bf e})^{ss}$ in \cite[1.1]{Y:twist2}.
\end{NB}

For a torsion free sheaf $E$ on $X$,  we set
$$
\Delta(E)=c_2(E)-\frac{\rk E-1}{2\rk E}(c_1(E)^2) \in {\Bbb Q},
$$
where $\rk E$ is the rank of $E$.
Then $\chi(E,E)=-2\rk E \Delta(E)$ and
we have the following relations.
\begin{equation}
\begin{split}
\chi(E)= & \ch_2(E)-\frac{1}{2}(c_1(E) \cdot K_X)=
\frac{1}{2}(c_1(E) \cdot (c_1(E)-K_X))-c_2(E),\\
2 \rk E \Delta(E)= & -2 \rk E \chi(E)-\rk E (c_1 \cdot K_X)+(c_1(E)^2).
\end{split}
\end{equation}

Let $X$ be an elliptic ruled surface. 
Then $h^0({\cal O}_X)=h^1({\cal O}_X)=1$ and $h^2({\cal O}_X)=0$. In particular
$\chi({\cal O}_X)=0$. 
We have $e(X)=(1+xy)(1-x)(1-y)$.
We set
\begin{equation}
Z_{x,y}(X,u):=\frac{(1-xu)(1-yu)(1-x^2y u)(1-xy^2 u)}
{(1-u)(1-xyu)^2(1-x^2 y^2 u)}.
\end{equation}
Then the Hodge polynomials of the Hilbert schemes $\Hilb_X^n$ of $n$ points on $X$
are give by 
\begin{equation}\label{eq:e(Hilb)}
\sum_n e(\Hilb_X^n)q^n=\prod_{a >0} Z_{x,y}(X,(xy)^{-1}(xyq)^a)
\end{equation} 
(see \cite{GS}).

\begin{prop}\label{prop:deform}
Let $\varpi:X \to C$ be an elliptic ruled surface
such that
$-K_X$ is nef.
Assume that $\gcd(r,\xi,\chi)=1$ and
$H$ is general with respect to $(r,\xi,\chi)$. 
\begin{enumerate}
\item[(1)]
$M_H(r,\xi,\chi)$ is a smooth projective manifold with 
$$
\dim M_H(r,\xi,\chi)=-2 r\chi-r(\xi \cdot K_X)+(\xi^2)+1
$$
unless $r=(\xi \cdot K_X)=0$.
\item[(2)]
Assume that $r \ne 0$ or $(\xi \cdot K_X) \ne 0$. Then
the deformation class of $M_H(r,\xi,\chi)$ is independent of $X$. 
In particular $e(M_H(r,\xi,\chi))$ is independent of the choice of $X$.
\end{enumerate}
\end{prop}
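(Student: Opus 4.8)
\emph{Proof proposal.} For part (1) the plan is to read off smoothness and the dimension from deformation theory. Since $H$ is general and $\gcd(r,\xi,\chi)=1$, every $H$-semistable sheaf with invariants $(r,\xi,\chi)$ is automatically $H$-stable, so $M_H(r,\xi,\chi)=\overline{M}_H(r,\xi,\chi)$ is projective and carries a universal family. Each such $E$ is simple, hence $\dim_{\Bbb C}\Hom(E,E)=1$, and the Zariski tangent and obstruction spaces at $[E]$ are $\Ext^1(E,E)$ and $\Ext^2(E,E)$. First I would prove $\Ext^2(E,E)=0$. By Serre duality $\Ext^2(E,E)\cong\Hom(E,E\otimes K_X)^{\vee}$, so it suffices to rule out a nonzero $\phi:E\to E\otimes K_X$. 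If $r>0$, then since $-K_X$ is nef and numerically nontrivial, $(-K_X\cdot H)>0$ for ample $H$, whence $\mu(E\otimes K_X)=\mu(E)+(K_X\cdot H)<\mu(E)$; tracking the image $I$ of $\phi$ as a quotient of $E$ and a subsheaf of $E\otimes K_X$ gives $\mu(E)\le\mu(I)\le\mu(E\otimes K_X)<\mu(E)$, a contradiction, so $\phi=0$. If $r=0$, then $\xi$ is effective and twisting by $K_X$ shifts the reduced Hilbert polynomial by $(\xi\cdot K_X)/(\xi\cdot H)$; when $(\xi\cdot K_X)<0$ the same stability comparison forces $\phi=0$, while $(\xi\cdot K_X)=0$ would permit $E\cong E\otimes K_X$, which is exactly the excluded case $r=(\xi\cdot K_X)=0$. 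Granting the vanishing, $M_H$ is smooth of dimension $\dim\Ext^1(E,E)=1-\chi(E,E)$, and a Hirzebruch--Riemann--Roch computation using $\chi({\cal O}_X)=0$ together with the relations of Subsection 1.1 gives $\chi(E,E)=2r\chi+r(\xi\cdot K_X)-(\xi^2)$, which yields the asserted dimension.

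For part (2) the idea is to realize the spaces $M_H(r,\xi,\chi)$ as the fibers of a single smooth projective morphism over a connected base. I would first build a flat family ${\cal X}\to S$ of elliptic ruled surfaces over a connected base $S$ with $-K_{{\cal X}_s}$ nef for all $s$, such that every elliptic ruled surface with the prescribed value of $e$ occurs as a fiber; here one varies both the elliptic base curve (whose moduli is connected) and the ${\Bbb P}^1$-bundle structure, the latter being connected for fixed $e$ by Atiyah's classification of vector bundles on an elliptic curve (for $e=0$, for instance, the Atiyah surface degenerates to $C\times{\Bbb P}^1$ by scaling the extension class). Because all fibers share the lattice $\NS=\Bbb{Z}C_0\oplus\Bbb{Z}g$ with a fixed intersection form, the topological type $(r,\xi,\chi)$ and the finite wall structure it determines are locally constant along $S$, so I may choose a relative polarization ${\cal H}$ lying in a single chamber, general for every fiber. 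I would then form the relative moduli space $\pi:{\cal M}_{\cal H}(r,\xi,\chi)\to S$ by the standard construction (cf. \cite{M-W}, \cite[Thm. 4.7]{Y:twist2}), which is projective over $S$.

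Finally, the hypothesis $r\ne 0$ or $(\xi\cdot K_X)\ne 0$ guarantees, via part (1) applied fiberwise, that every fiber of $\pi$ is smooth of the constant dimension $-2r\chi-r(\xi\cdot K_X)+(\xi^2)+1$; hence $\pi$ is a smooth projective morphism over the connected base $S$. By Ehresmann's theorem its fibers are diffeomorphic, and since the Hodge numbers $h^{p,q}$ are locally constant in a smooth projective family, $e(M_H(r,\xi,\chi))$ is independent of the fiber, which proves the claim. I expect the genuine difficulty to lie not in the cohomological input but in the second paragraph: assembling a connected family of surfaces realizing all the relevant $X$ while keeping $-K_{{\cal X}_s}$ nef and a single relative polarization general on every fiber, so that $\pi$ is simultaneously smooth and proper. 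Once that family is in place, the deformation invariance of the Hodge numbers is formal.
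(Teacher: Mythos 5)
Your overall strategy matches the paper's: part (1) is reduced to the vanishing $\Ext^2(E,E)=\Hom(E,E(K_X))^{\vee}=0$ plus a Riemann--Roch count (your computation $\chi(E,E)=2r\chi+r(\xi\cdot K_X)-(\xi^2)$ is correct, using $\chi({\cal O}_X)=0$), and part (2) is the same relative-moduli argument the paper uses: it takes a family ${\cal E}_t$ of semistable rank-2 bundles of degree $0$ or $-1$, forms ${\cal X}={\Bbb P}({\cal E})\to T$ with relative polarization ${\cal H}=xc_1({\cal L})+yg$, and concludes that the smooth family $M_{({\cal X},{\cal H})/T}(r,\xi,\chi)\to T$ has fibers with constant Hodge numbers. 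Your additional care about walls and about varying the base elliptic curve is consistent with this, and your $r>0$ vanishing argument is the standard one the paper invokes via $(K_X\cdot H)<0$.

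However, your treatment of the case $r=0$ --- the case this paper actually needs --- has a genuine gap. You assert that ``twisting by $K_X$ shifts the reduced Hilbert polynomial by $(\xi\cdot K_X)/(\xi\cdot H)$'' and that the same image-tracking comparison as for $r>0$ then forces $\phi=0$ when $(\xi\cdot K_X)<0$. For $1$-dimensional sheaves this comparison is invalid: tensoring by a line bundle does not shift the ratios $\chi/(c_1\cdot H)$ uniformly, because the shift for a subsheaf $I\subset E(K_X)$ is $(c_1(I)\cdot K_X)/(c_1(I)\cdot H)$, which depends on $c_1(I)$; in particular stability of $E$ does not imply the stability of $E(K_X)$ that your ``subsheaf of $E\otimes K_X$'' inequality requires. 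Concretely, the image $G=\phi(E)$ may be supported on a proper subcurve of $\Div(E)$, and what stability of $E$ actually gives (comparing $G$ as a quotient of $E$ and $G(-K_X)$ as a subsheaf of $E$) is only $(c_1(G)\cdot K_X)\ge 0$ --- no contradiction at all. Nefness of $-K_X$ must enter exactly here: it forces $(c_1(G)\cdot K_X)\le 0$, hence $(c_1(G)\cdot K_X)=0$ and equality of all the reduced Hilbert polynomials, whence $G\cong E$ by stability and so $(\xi\cdot K_X)=0$, the excluded case; this is the paper's Lemma \ref{lem:vanish} and Proposition \ref{prop:smooth}. Note that your argument as written never uses nefness when $r=0$, so it would prove the vanishing on any surface whenever $(\xi\cdot K_X)<0$; the paper's Example (a ruled surface with $e>0$, $\Div(E)=C_0+C$) shows this is false: there a nonzero map $E\to E(K_X)$ exists, factoring through a sheaf supported on $C_0$, whose class pairs positively with $K_X$. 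So this step cannot be tightened --- it must be replaced by the support-sensitive argument above.
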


\begin{proof}
(1)
It is sufficiently to prove
$\Ext^2(E,E)=0$ for $E \in M_H(r,\xi,\chi)$.
If $r>0$, then $(K_X \cdot H)<0$ implies
the claim.
If $r=0$, the the claim will be proved in Proposition \ref{prop:smooth}.

(2)
\begin{NB}
Old:
Let $T$ be a smooth curve and
$\phi:{\cal E}\to {\cal O}_{T \times C}$ a family of semi-stable vector bundles ${\cal E}_t$ $(t \in T)$
of rank 2 and degree 0 or $-1$ on $C$ and surjective morphisms $\phi_t$.
Then we have a family of ${\Bbb P}^1$-bundles ${\cal X} \to C \times T \to T$ with a family of 
minimal sections
$\sigma:C \times T \to {\cal X}$.
Then $\NS({\cal X}_t)={\Bbb Z}\sigma_t + {\Bbb Z}g$ and
$H=x \sigma_t+y g$ is ample if and only if $x,y>0$.
\end{NB}
Let $T$ be a smooth curve and
$\phi:{\cal E}$ a family of semi-stable vector bundles ${\cal E}_t$ $(t \in T)$
of rank 2 and degree 0 or $-1$ on $C$
Then we have a family of ${\Bbb P}^1$-bundles ${\cal X}:={\Bbb P}^1({\cal E}) \to C \times T \to T$.
Let ${\cal L}:={\cal O}_{{\Bbb P}({\cal E})}(1)$ be the tautological line bundle of the projective
bundle. Then  
$\NS({\cal X}_t)={\Bbb Z}c_1({\cal L})_t + {\Bbb Z}g$,
$(c_1({\cal L})_t \cdot g)=1$ and $(c_1({\cal L})^2_t)=-e$.
For a family of ample divisors ${\cal H}=x c_1({\cal L})+y g$,
the nefness of $K_{{\cal X}_t}$ implies that 
we have a smooth family of moduli spaces
$M_{({\cal X},{\cal H})/T}(r,\xi,\chi) \to T$.
In particular $e(M_{{\cal H}_t}(r,\xi,\chi))$ is 
independent of $t \in T$.
\end{proof}

\begin{NB}
Assume that $(\xi \cdot C_0) \ne 0$.
If $r=0$, then $\chi_\alpha(E(K_X))=\chi_\alpha(E)+(K_X \cdot \xi(E))$.
\end{NB}

\begin{rem}
Since $-K_X$ is numerically equivalent to an effective divisor
for any elliptic ruled surface,
if $r>0$, then the claim (1) holds without assuming the nefness of $-K_X$. 
\end{rem}

\begin{defn}
Let $X$ be an elliptic ruled surface
such that
$-K_X$ is nef.
Then $M_{-K_X+kg}(r,\xi,\chi)$ is independent of $k \gg 0$.
We set $M(r,\xi,\chi):=M_{-K_X+kg}(r,\xi,\chi)$ $(k \gg 0)$. 
\end{defn}

\subsection{Moduli of stable 1-dimensional sheaves.}

Let $X$ be a smooth projective surface.

\begin{lem}\label{lem:vanish}
Assume that $-K_X$ is nef.
For a ${\Bbb Q}$-divisor $\alpha$, let $E$ and $F$ be
$\alpha$-twisted semi-stable sheaves of dimension 1
such that
\begin{equation}
\frac{\chi_\alpha(E)}{(c_1(E) \cdot H)} \geq
\frac{\chi_\alpha(F)}{(c_1(F) \cdot H)}.
\end{equation}
Then $\Hom(E,F(K_X))=0$ if one of the following conditions hold:
\begin{enumerate}
\item[(1)]
\begin{equation}
\frac{\chi_\alpha(E)}{(c_1(E) \cdot H)} >
\frac{\chi_\alpha(F)}{(c_1(F) \cdot H)}.
\end{equation}
\item[(2)]
$-K_X$ is ample.
\item[(3)]
$E$ is $\alpha$-twisted stable and $(c_1(E) \cdot K_X) \ne 0$.
\item[(4)]
$F$ is $\alpha$-twisted stable and $(c_1(F) \cdot K_X) \ne 0$.
\end{enumerate} 
\end{lem}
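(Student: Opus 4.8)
The plan is to argue by contraposition: assuming a nonzero homomorphism $\phi\colon E\to F(K_X)$ exists, I would derive the slope equality $\chi_\alpha(E)/(c_1(E)\cdot H)=\chi_\alpha(F)/(c_1(F)\cdot H)$ together with the vanishing $(c_1(\im\phi)\cdot K_X)=0$, and then show that each of the hypotheses (1)--(4) is incompatible with these two conclusions. Write $\mu_\alpha(G):=\chi_\alpha(G)/(c_1(G)\cdot H)$ for a pure $1$-dimensional sheaf $G$; since $\chi_\alpha$ and $(c_1(-)\cdot H)$ are both additive on short exact sequences, $\alpha$-twisted semistability of such a $G$ is equivalent to $\mu_\alpha(G')\le\mu_\alpha(G)$ for every subsheaf $G'$, equivalently $\mu_\alpha(G'')\ge\mu_\alpha(G)$ for every $1$-dimensional quotient $G''$. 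I would first record the numerical facts I need: for a line bundle $L$ one has $c_1(G\otimes L)=c_1(G)$ and $\chi_\alpha(G\otimes L)=\chi_\alpha(G)+(c_1(G)\cdot L)$ (a one-line Riemann--Roch computation from $\chi_\alpha(G)=\ch_2(G)-(c_1(G)\cdot\alpha)-\tfrac12(c_1(G)\cdot K_X)$), so that $\mu_\alpha(G\otimes L)=\mu_\alpha(G)+(c_1(G)\cdot L)/(c_1(G)\cdot H)$.

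Next, let $\phi\colon E\to F(K_X)$ be nonzero and set $I:=\im\phi$. As a nonzero subsheaf of the pure $1$-dimensional sheaf $F(K_X)$, $I$ is itself pure of dimension $1$, so $c_1(I)$ is a nonzero effective class; in particular $(c_1(I)\cdot H)>0$, and nefness of $-K_X$ gives $(c_1(I)\cdot K_X)\le 0$. Viewing $I$ as a quotient of $E$, semistability of $E$ yields $\mu_\alpha(I)\ge\mu_\alpha(E)$. Viewing $I(-K_X)$ as a subsheaf of $F$ (via $\phi\otimes\mathcal{O}(-K_X)$), semistability of $F$ yields $\mu_\alpha(I(-K_X))\le\mu_\alpha(F)$, i.e. $\mu_\alpha(I)-(c_1(I)\cdot K_X)/(c_1(I)\cdot H)\le\mu_\alpha(F)$ by the formula above. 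Since $-(c_1(I)\cdot K_X)/(c_1(I)\cdot H)\ge 0$, stringing these together gives
\[
\mu_\alpha(E)\le\mu_\alpha(I)\le\mu_\alpha(I)-\frac{(c_1(I)\cdot K_X)}{(c_1(I)\cdot H)}\le\mu_\alpha(F).
\]
Combined with the standing hypothesis $\mu_\alpha(E)\ge\mu_\alpha(F)$, every inequality must be an equality; in particular $\mu_\alpha(E)=\mu_\alpha(F)$, $(c_1(I)\cdot K_X)=0$, $\mu_\alpha(I)=\mu_\alpha(E)$ and $\mu_\alpha(I(-K_X))=\mu_\alpha(F)$.

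The four cases then follow at once. Under (1) the strict inequality $\mu_\alpha(E)>\mu_\alpha(F)$ contradicts $\mu_\alpha(E)=\mu_\alpha(F)$. Under (2), ampleness of $-K_X$ forces $(c_1(I)\cdot K_X)<0$ for the nonzero effective class $c_1(I)$, contradicting $(c_1(I)\cdot K_X)=0$. Under (3), the equality $\mu_\alpha(I)=\mu_\alpha(E)$ for the quotient $E\twoheadrightarrow I$ together with $\alpha$-twisted stability of $E$ forces $E\simt I$, whence $c_1(E)=c_1(I)$ and $(c_1(E)\cdot K_X)=(c_1(I)\cdot K_X)=0$, contradicting the hypothesis. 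Case (4) is dual: $\mu_\alpha(I(-K_X))=\mu_\alpha(F)$ with $F$ stable forces $I(-K_X)\simt F$, so $c_1(F)=c_1(I)$ and again $(c_1(F)\cdot K_X)=0$, a contradiction.

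The argument is essentially formal once the setup is in place, so I do not expect a serious obstacle; the only points requiring care are the bookkeeping of strict versus non-strict slope inequalities and, in cases (3)--(4), the passage from an equality of slopes to an actual isomorphism. The latter relies on the purity of $I$ — so that it has the same support dimension as $E$ and as $F(K_X)$ and its slope is genuinely comparable — and crucially on \emph{stability}, not merely semistability, of the relevant sheaf.
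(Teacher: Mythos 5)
Your proposal is correct and follows essentially the same argument as the paper: take the image $G=\varphi(E)$ of a hypothetical nonzero map, use semistability of $E$ (quotient) and of $F$ (subsheaf $G(-K_X)\subset F$) together with $(c_1(G)\cdot K_X)\le 0$ from nefness to force all slope inequalities into equalities, then rule out each of (1)--(4). The only cosmetic difference is that you spell out the twisted Riemann--Roch bookkeeping $\chi_\alpha(G\otimes L)=\chi_\alpha(G)+(c_1(G)\cdot L)$ and the purity/kernel argument behind $E\cong G$, which the paper leaves implicit.
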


\begin{proof}
Assume that
there is a non-trivial homomorphism
$\varphi:E \to F(K_X)$.
We set $G:=\varphi(E)$.
Then 
\begin{equation}
\frac{\chi_\alpha(E)}{(c_1(E) \cdot H)} \leq
\frac{\chi_\alpha(G)}{(c_1(G) \cdot H)}.
\end{equation}
Since $G(-K_X)$ is a subsheaf of $F$,
we also have
\begin{equation}
\frac{\chi_\alpha(G)-(c_1(G) \cdot K_X)}{(c_1(G) \cdot H)}=
\frac{\chi_\alpha(G(-K_X))}{(c_1(G) \cdot H)} \leq 
\frac{\chi_\alpha(F)}{(c_1(F) \cdot H)}.
\end{equation}
Since $(c_1(G) \cdot K_X) \leq 0$, we get $(c_1(G) \cdot K_X)=0$ and
\begin{equation}
\frac{\chi_\alpha(E)}{(c_1(E) \cdot H)}=
\frac{\chi_\alpha(G)}{(c_1(G) \cdot H)}= 
\frac{\chi_\alpha(F)}{(c_1(F) \cdot H)}.
\end{equation}
In particular cases (1) and (2) do not occur. 
If $E$ is $\alpha$-twisted stable, then 
$E \cong G$, which implies $(c_1(E) \cdot K_X)=0$.
If $F$ is $\alpha$-twisted stable, then 
$G \cong F$, which implies $(c_1(F) \cdot K_X)=0$.
Thus cases (3) and (4) do not occur.
Therefore $\Hom(E,F(K_X))=0$.
\end{proof}

\begin{prop}[cf. {\cite[Prop. 2.7]{Y:twist2}}]\label{prop:chamber0}
Assume that $-K_X$ is nef. Then
$e(M_H^\eta(0,\xi,\chi))$ is independent of the choice of a general $(H,\eta)$.
\end{prop}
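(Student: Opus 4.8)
The plan is to analyse the wall-and-chamber structure on the space of stability parameters $(H,\eta)$ and to show that $e(M_H^\eta(0,\xi,\chi))$ is unchanged as $(H,\eta)$ crosses a wall. First I would fix the numerical class $(0,\xi,\chi)$ and recall that the set of parameters for which a strictly semistable sheaf of this class exists is a locally finite union of real codimension-one walls, cut out by the equations $\frac{\chi_\eta(F)}{(c_1(F)\cdot H)}=\frac{\chi_\eta(E)}{(c_1(E)\cdot H)}$ attached to the possible destabilizing numerical subclasses. Off these walls every $\eta$-twisted semistable sheaf of the class is stable, and the set of stable sheaves does not change within a chamber, so $e(M_H^\eta(0,\xi,\chi))$ is constant on each chamber. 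It then suffices to compare two chambers meeting along a single wall.

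Next I would choose a general point $(H_0,\eta_0)$ on a wall $W$ and two nearby general parameters $(H_\pm,\eta_\pm)$ on the two sides. Every sheaf whose stability changes across $W$ is $(H_0,\eta_0)$-semistable, and I would stratify these by the numerical type of the Jordan--H\"older filtration at the wall, whose stable factors all share the reduced slope $\chi_{\eta_0}(\cdot)/(c_1(\cdot)\cdot H_0)$. For the elementary two-step type with graded pieces of classes $v_1,v_2$, the sheaves that are $(H_+,\eta_+)$-stable but cease to be $(H_-,\eta_-)$-stable are exactly the nonsplit extensions of one factor by the other; these form a $\mathbb P(\Ext^1)$-bundle over $M(v_1)\times M(v_2)$, while on the opposite side one gets the $\mathbb P(\Ext^1)$-bundle with the roles of the two factors exchanged. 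Since the virtual Hodge polynomial is additive over such a stratification and multiplicative for projective bundles, the identity $e(M_{H_+}^{\eta_+})=e(M_{H_-}^{\eta_-})$ reduces to matching these two bundle contributions.

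The heart of the argument is then the symmetry $\dim\Ext^1(E_1,E_2)=\dim\Ext^1(E_2,E_1)$ for stable factors $E_1,E_2$ of equal reduced slope on the wall. For one-dimensional sheaves the Euler pairing is symmetric, $\chi(E_1,E_2)=\chi(E_2,E_1)=-(c_1(E_1)\cdot c_1(E_2))$, so it is enough to control $\Hom$ and $\Ext^2$. Stability of the factors gives $\Hom(E_1,E_2)=\Hom(E_2,E_1)=0$ when $E_1\not\cong E_2$ (both equal $\mathbb C$ when $E_1\cong E_2$), while Serre duality identifies $\Ext^2(E_i,E_j)\cong\Hom(E_j,E_i(K_X))^\vee$, which vanishes by Lemma \ref{lem:vanish} under the slope equality together with the stability of the factors. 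Hence $\dim\Ext^1(E_1,E_2)=\dim\Ext^1(E_2,E_1)=(c_1(E_1)\cdot c_1(E_2))$, the two projective bundles have equal fibre dimension over the same base $M(v_1)\times M(v_2)$, and their virtual Hodge polynomials coincide. Summing over all Jordan--H\"older types gives $e(M_{H_+}^{\eta_+})=e(M_{H_-}^{\eta_-})$, and crossing the walls one at a time yields the proposition.

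The main obstacle I expect is the vanishing $\Ext^2(E_i,E_j)=0$ in the boundary situation where a Jordan--H\"older factor satisfies $(c_1(E_i)\cdot K_X)=0$: since $-K_X$ is only assumed nef, none of the conditions (1)--(4) of Lemma \ref{lem:vanish} is automatically available there, and one must either verify the symmetry $\dim\Hom(E_1,E_2(K_X))=\dim\Hom(E_2,E_1(K_X))$ directly for such fibrewise classes or argue that these factors do not produce a genuine wall. A secondary point requiring care is the set-up of the full Jordan--H\"older stratification (factors with multiplicity, and filtrations of length greater than two), so that the elementary two-factor cancellation propagates; this is bookkeeping once the $\Ext^1$ symmetry is in hand.
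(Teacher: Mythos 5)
Your overall strategy---wall-crossing controlled by the vanishing of Lemma \ref{lem:vanish} plus the symmetry of the Euler pairing on $1$-dimensional classes---is the right one, and it is essentially what the paper invokes by citing \cite[Prop.~2.6, 2.7]{Y:twist2}. But your implementation has a genuine gap, and it is exactly the one you flag at the end. Because you stratify by Jordan--H\"{o}lder type \emph{at the wall}, all factors you compare have \emph{equal} reduced slope, so the only usable cases of Lemma \ref{lem:vanish} are (3) and (4), which require a stable factor with $(c_1\cdot K_X)\neq 0$. On the surfaces of this paper that requirement fails in the main case, not in a corner case: the classes that cut out walls for $(0,\xi,\chi)$ are the fiber-type classes $(0,rf_0,d)$, and $(f_0\cdot K_X)=0$. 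Pairs of Jordan--H\"{o}lder factors both of fiber type occur as soon as the filtration has length $\geq 3$ (and always when $\xi$ itself is a fiber class, which the proposition allows and the paper uses, e.g.\ for $Y=M_H(0,2f_0,1)$). For such pairs the vanishing you need is genuinely false: a stable sheaf $E_1$ supported on a fiber satisfies $c_1(E_1(K_X))=c_1(E_1)$ and $\chi(E_1(K_X))=\chi(E_1)$, so $E_2\cong E_1(K_X)$ is an admissible second factor and then $\Ext^2(E_1,E_2)\cong \Hom(E_2,E_1(K_X))^{\vee}\neq 0$ (for $E_1$ on a smooth fiber even $\Ext^2(E_1,E_1)\neq 0$, since $K_X$ restricts trivially there). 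One can check a pointwise symmetry $\dim\Hom(E_1,E_2(K_X))=\dim\Hom(E_2,E_1(K_X))$ using $E(2K_X)\cong E$ for fiber-supported sheaves, but these groups jump on proper subloci of $M(v_1)\times M(v_2)$, so your ${\Bbb P}(\Ext^1)$-``bundles'' are not bundles and the additivity/multiplicativity argument as written does not close.

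The missing idea, which is how the machinery of \cite[Prop.~2.6, 2.7]{Y:twist2} cited in the paper's proof works, is to stratify the locus of sheaves semistable for the wall parameter by \emph{Harder--Narasimhan type with respect to the adjacent general parameter} $(H_\pm,\eta_\pm)$, and to work with moduli stacks and virtual Hodge polynomials rather than with coarse moduli spaces and projective bundles. For an HN filtration the slope inequalities between the factors are \emph{strict}, so the required vanishing $\Ext^2(E_j,E_i)\cong\Hom(E_i,E_j(K_X))^{\vee}=0$ follows from case (1) of Lemma \ref{lem:vanish} alone, with no hypothesis whatsoever on $(c_1(E_i)\cdot K_X)$; each HN stratum then contributes the product of the $e$'s of the semistable stacks of the $v_i$ times a power of $xy$ determined by $-\chi(v_j,v_i)$, and the symmetry $\chi(v_i,v_j)=-(c_1(v_i)\cdot c_1(v_j))=\chi(v_j,v_i)$ makes the resulting expressions for the wall parameter in terms of the $+$-side and the $-$-side agree term by term; induction on the class then gives $e(M_{H_+}^{\eta_+}(0,\xi,\chi))=e(M_{H_-}^{\eta_-}(0,\xi,\chi))$. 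This formulation also disposes of what you call bookkeeping: filtrations of arbitrary length and repeated or isomorphic factors are handled uniformly by the stack formalism, whereas in the coarse projective-bundle picture they are a real obstruction, precisely because of the jumping $\Ext$-groups above.
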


\begin{proof}
Thanks to Lemma \ref{lem:vanish}, 
we can show that 
the claim of \cite[Prop. 2.6]{Y:twist2} holds.
Hence the claim holds (see \cite[Prop. 2.7]{Y:twist2}).
\end{proof}

\begin{rem}
If $\chi \ne 0$, then
there is a general $(H,\eta)$ with $\eta=0$
(see \cite[Lem. 1.2]{Y:7}).
\end{rem}

\begin{prop}\label{prop:smooth}
Assume that $-K_X$ is nef and $(\xi \cdot K_X) \ne 0$.
Then $\Ext^2(E,E)=0$ for $E \in M_H^\alpha(0,\xi,\chi)$.
\end{prop}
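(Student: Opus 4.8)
The plan is to combine Serre duality with the vanishing result already established in Lemma \ref{lem:vanish}. First I would use Serre duality on the smooth projective surface $X$ to rewrite the second Ext group as
\begin{equation}
\Ext^2(E,E) \cong \Hom(E,E(K_X))^{\vee}.
\end{equation}
Thus it suffices to prove that $\Hom(E,E(K_X))=0$.

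To obtain this, I would apply Lemma \ref{lem:vanish} with the choice $F=E$. Since $E\in M_H^\alpha(0,\xi,\chi)$ is an $\alpha$-twisted stable $1$-dimensional sheaf, the monotonicity hypothesis
\begin{equation}
\frac{\chi_\alpha(E)}{(c_1(E)\cdot H)}\geq \frac{\chi_\alpha(E)}{(c_1(E)\cdot H)}
\end{equation}
holds trivially, as an equality. The key observation is that the self-homomorphism case, where the two slopes coincide, is exactly what condition (3) of Lemma \ref{lem:vanish} is designed to handle: $E$ is $\alpha$-twisted stable and, by the hypothesis of the present proposition, $(c_1(E)\cdot K_X)=(\xi\cdot K_X)\neq 0$. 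Invoking condition (3) therefore gives $\Hom(E,E(K_X))=0$, and combined with the Serre duality isomorphism above this yields $\Ext^2(E,E)=0$.

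I do not expect a serious obstacle here, since the real work has been absorbed into Lemma \ref{lem:vanish}, whose proof already uses the nefness of $-K_X$ through the inequality $(c_1(G)\cdot K_X)\leq 0$ for the image sheaf $G$. The only point requiring care is confirming that the equal-slope situation forced by taking $F=E$ does not fall under conditions (1) or (2)---which genuinely fail, as neither the strict slope inequality nor the ampleness of $-K_X$ is available---but instead is covered by (3) precisely because $(\xi\cdot K_X)\neq 0$. This explains why that numerical hypothesis is imposed in the statement, and it is what makes the argument go through without assuming $-K_X$ ample.
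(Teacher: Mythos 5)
Your proof is correct and is essentially identical to the paper's own argument: the paper likewise invokes Serre duality to write $\Ext^2(E,E)=\Hom(E,E(K_X))^{\vee}$ and then applies Lemma \ref{lem:vanish} (via condition (3), with $F=E$) using the stability of $E$ and the hypothesis $(\xi \cdot K_X)\ne 0$. Your additional remarks on why conditions (1) and (2) are unavailable and why (3) is the relevant case are accurate elaborations of the same reasoning.
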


\begin{proof}
For an $\alpha$-twisted stable sheaf $E \in M_H^\alpha(0,\xi,\chi)$, 
Lemma \ref{lem:vanish} implies
$\Ext^2(E,E)=\Hom(E,E(K_X))^{\vee}=0$.
\begin{NB}
assume that
there is a non-trivial homomorphism
$\varphi:E \to E(K_X)$.
We set $G:=\varphi(E)$.
Then 
\begin{equation}
\frac{\chi_\alpha(E)}{(c_1(E) \cdot H)} \leq
\frac{\chi_\alpha(G)}{(c_1(G) \cdot H)}.
\end{equation}
Since $G(-K_X)$ is a subsheaf of $E$,
we also have
\begin{equation}
\frac{\chi_\alpha(G)-(c_1(G) \cdot K_X)}{(c_1(G) \cdot H)}=
\frac{\chi_\alpha(G(-K_X))}{(c_1(G) \cdot H)} \leq 
\frac{\chi_\alpha(E)}{(c_1(E) \cdot H)}.
\end{equation}
Since $(c_1(G) \cdot K_X) \leq 0$, we get $(c_1(G) \cdot K_X)=0$ and
\begin{equation}
\frac{\chi_\alpha(G)}{(c_1(G) \cdot H)}= 
\frac{\chi_\alpha(E)}{(c_1(E) \cdot H)}.
\end{equation}
By the stability of $E$, we see that
$\varphi$ is an isomorphism and $(c_1(E) \cdot K_X)=0$, which is a contradiction.
\end{NB}
\end{proof}

\begin{rem}
Assume that $-K_X$ is nef and $(\xi \cdot K_X)=0$.
Then $E \in M_H^\alpha(0,\xi,\chi)$ satisfies
$\chi(E(K_X))=\chi$. In this case,
$\Ext^2(E,E)=0$ if and only if $E \not \cong E(K_X)$.
\begin{NB}
If $E \not \cong E(K_X)$, then $E$ deforms along deformations of $X$.
\end{NB}
\end{rem}

For a purely 1-dimensional sheaf $E$, $\Div(E)$ denotes the scheme-theoretic support of $E$.
We have a morphism 
\begin{equation}
\begin{matrix}
M_H^\alpha(0,\xi,\chi) & \to & \Hilb_X^\xi\\
E & \mapsto & \Div(E).
\end{matrix}
\end{equation}
Assume that $h^2({\cal O}_X)=0$ and $\xi-K_X$ is ample.
Then Kodaira vanishing theorem implies $h^1({\cal O}_X(D))=0$
if $c_1({\cal O}_X(D))=\xi$. 
Hence we see that $\Hilb_X^\xi$ is smooth, by   
$h^1({\cal O}_D(D))=0$ ($D \in \Hilb_X^\xi$).
 
\begin{rem}
Let $E$ be a stable 1-dimensional sheaf with $(K_X \cdot \Div(E)) < 0$.
If $(K_X \cdot C) \leq 0$ for all irreducible components $C$ of $\Div(E)$,
then the proof of Proposition \ref{prop:smooth} implies $\Ext^2(E,E)=0$.
On the other hand we have an example of $E$ such that $\Ext^2(E,E) \ne 0$.
Thus the nefness is a reasonable condition to ensure the smoothness of the moduli space. 
\end{rem}

\begin{ex}
Assume that $X$ is a ruled surface with a minimal section $C_0$.
Assume that $e:=-(C_0^2)>0$.
Let $E$ be a stable 1-dimensional sheaf such that
$\Div(E)=C_0+C$ and $C_0$ intersects $C$ properly.
Then $L:=E_{|C_0}/(\text{torsion})$ is a line bundle on $C_0$.
Assume that $h^0({\cal O}_{C_0}(K_X-C))=h^0(K_{C_0}(-C_0-C)) \ne 0$.
We set $F:=\ker(E \to E_{|C}/(\text{torsion}))$. 
$F$ is a line bundle on $C_0$.
Then we have an injective homomorphism $L(-C) \to F$. 
Hence we get a non-zero homomorphism $E \to L \to F(C) \to F(K_X) \to E(K_X)$.
\end{ex}

\section{Moduli spaces of stable 1-dimensional sheaves on $X$.}

\subsection{An elliptic ruled surface with $e=-1$.}
Let $\varpi:X \to C$ be an elliptic ruled surface with $e=-1$. 
Thus we have $(C_0^2)=-e=1$.
Then $-K_X \equiv 2C_0-g$ and $|-2K_X|$ defines an elliptic fibration
$\pi:X \to {\Bbb P}^1$.
$\pi$ has three multiple fibers $\Pi_1,\Pi_2,\Pi_3$ of multiplicity 2
and ${\cal O}_X(K_X) \cong \pi^*({\cal O}_{{\Bbb P}^1}(-2))(\Pi_1+\Pi_2+\Pi_3)$.
We set $f_0:=\Pi_1$.
Then $f_0 \equiv -K_X \equiv 2C_0-g$.
We also have
$(f_0 \cdot C_0)=1$, $(f_0 \cdot g)=2$ and
$$
\NS(X)={\Bbb Z}C_0+{\Bbb Z}g={\Bbb Z}C_0+{\Bbb Z}f_0.
$$

As we mentioned in \cite[sect. 0]{Y:2}, we know the Hodge numbers of
$M_H(2,c_1,\chi)$ with $(c_1 \cdot g)=1$.
Thus we get
\begin{equation}\label{eq:e-poly1}
\begin{split}
& \sum_n e(M(2,C_0-g,n))q^{-n+\frac{1}{4}} \\
=&
\frac{(x-1)^2 (y-1)^2}{xy-1}\left( 
\sum_{\substack{a \geq 0\\ 2b-a \geq 0 }} (x^2y^2 q)^{\frac{(4b+1-2a)(2a+1)}{4}}(xy)^{\frac{(4b+1-2a)}{2}}-
\sum_{\substack{a < 0\\ 2b-a <0}} (x^2y^2 q)^{\frac{(4b+1-2a)(2a+1)}{4}}(xy)^{\frac{(4b+1-2a)}{2}}
\right)\\
& \quad \times 
\prod_{a \geq 1}Z_{x,y}(X,x^{-1} y^{-1}(x^2 y^2 q)^a)^2
\end{split}
\end{equation}
\begin{NB}
For $c_1=C_0-g$,
$\xi=(a+1/2)C_0-(b+1/2)g$.
$n:=2b-a$. Then $n \equiv a \mod 2$.
\end{NB}
and
\begin{equation}\label{eq:e-poly2}
\begin{split}
& \sum_n e(M(2,C_0,n))q^{-n+\frac{3}{4}} \\
=
& \frac{(x-1)^2(y-1)^2}{xy-1}\left( 
\sum_{\substack{a \geq 0\\ 2b-a > 0}} (x^2y^2 q)^{\frac{(4b-1-2a)(2a+1)}{4}}(xy)^{\frac{(4b-1-2a)}{2}}-
\sum_{\substack{a < 0\\ 2b-a \leq 0}} (x^2y^2 q)^{\frac{(4b-1-2a)(2a+1)}{4}}(xy)^{\frac{(4b-1-2a)}{2}}
\right)\\
& \quad \times \prod_{a \geq 1}Z_{x,y}(X,x^{-1} y^{-1}(x^2 y^2 q)^a)^2
\end{split}
\end{equation}
\begin{NB}
$m:=2b-a-1$. Then $m \equiv a+1 \mod 2$.
For $c_1=g$,
$\xi=-(b+\frac{1}{2})g+af$, $\xi \cdot f=-(b+1/2)<0$.
\end{NB}
(see also the proof of \cite[Prop. 3.3]{Y:1}).
Our argument is a consequence of \cite{K} and 
the computation \eqref{eq:e(Hilb)} of $e(\Hilb_X^n)$ by G\"{o}ttsche and Soergel \cite{GS}.
G\"{o}ttsche independently computed Hodge numbers by using virtual Hodge polynomials
and chamber structures of polarizations \cite[Thm. 4.4]{G1}.

We define two injective maps
$$
\nu_i: {\Bbb Z} \times {\Bbb Z} \to {\Bbb Z} \times {\Bbb Z},\; (i=1,2)
$$
by
\begin{equation}
\nu_1(a,b):=(a,2b-a),\;
\nu_2(a,b):=(a,2b-a-1).
\end{equation}
We note that
$(a,n) \in \im \nu_1$ if and only if $n \equiv a \mod 2$
and $(a,n) \in \im \nu_2$ if and only if $n \not \equiv a \mod 2$.
Hence we get
\begin{equation}
\im \nu_1 \cap \im \nu_2= \emptyset,\; \im \nu_1 \cup \im \nu_2= {\Bbb Z} \times {\Bbb Z}.
\end{equation}
\begin{NB}
\begin{equation}
b=\begin{cases}
\frac{a+n}{2} & n \equiv a \mod 2\\
\frac{a+n+1}{2} & n \not \equiv a \mod 2.
\end{cases} 
\end{equation}
\end{NB}
Then \eqref{eq:e-poly1} and \eqref{eq:e-poly2} are expressed as
\begin{equation}\label{eq:r=2}
\begin{split}
& \sum_n e(M(2,C_0,n))q^{-n+\frac{3}{4}} +
\sum_n e(M(2,C_0-g,n))q^{-n+\frac{1}{4}}\\
=
& \frac{(x-1)^2(y-1)^2}{xy-1}\left( 
\sum_{\substack{a \geq 0\\ n  \geq  0}} (x^2y^2 q)^{\frac{(2n+1)(2a+1)}{4}}(xy)^{\frac{(2n+1)}{2}}-
\sum_{\substack{a < 0\\ n< 0}} (x^2y^2 q)^{\frac{(2n+1)(2a+1)}{4}}(xy)^{\frac{(2n+1)}{2}}
\right)\\
& \quad \times \prod_{a \geq 1}Z_{x,y}(X,x^{-1} y^{-1}(x^2 y^2 q)^a)^2.
\end{split}
\end{equation}

\subsection{An autoequivalence of $X$.}
In this subsection, we shall relate the Hodge numbers of the moduli spaces of stable 1-dimensional sheaves
to \eqref{eq:r=2} by the relative Fourier-Mukai transform of Bridgeland \cite{Br:1}.  
We note that $2f_0$ is algebraically equivalent to a smooth fiber $f$
of the elliptic fibration $\pi$.
We set
$Y:=M_H(0,2f_0,1)$.
Then $Y$ is a fine moduli space, which is a smooth projective surface.
$Y$ has an elliptic fibration $Y \to C$ which is a compactification of
the relative Picard scheme $\Pic_{X'/C'}^1 \to C'$, where
$X'=X \setminus \cup_i \Pi_i$ and $C'=\pi(X')$.
Let ${\bf P}$ be a universal family on $X \times Y$.
We set ${\bf Q}:={\bf P}^{\vee}[1]$. Then 
${\bf P}$ and ${\bf Q}$ are coherent sheaves on $X \times Y$
and they are flat over $X$ and $Y$ (\cite[Lem. 5.1]{Br:1}). 
We note that ${\bf Q}_{|X \times \{ y\}} \in M_H(0,2f_0,-1)$
for all $y \in Y$. 
Let us consider a Fourier-Mukai transform
$\Phi_{X \to Y}^{{\bf P}^{\vee}}:{\bf D}(X) \to {\bf D}(Y)$ defined by
$$
\Phi_{X \to Y}^{{\bf P}^{\vee}}(E):={\bf R}p_{Y*}({\bf P}^{\vee} \otimes p_X^*(E)),\; 
E \in {\bf D}(X) 
$$
where $p_X$ and $p_Y$ are projections from $X \times Y$ to $X$ and $Y$
respectively. 
By \cite[Thm. 1.1]{U}, we have an identification $Y \cong X$ as elliptic surfaces 
over $C$. 
We denote divisors on $Y$
corresponding to $C_0,f_0,f,g \subset X$ via the identification $X \cong Y$
by the same symbols $C_0,f_0,f,g$. 
%
\begin{NB}
$(C_0 \cdot f)=2$.
We note that $C_0-f_0=g-C_0$.
For $E \in M(2,C_0+g,\Delta)$,
$E(-g)_{|f}$ is a vector bundle of rank 2 and $\chi(E(-g)_{|f})=2$.
We have $M(2,C_0+g,\Delta) \cong M(2,C_0-f_0,\Delta)$
 by $E \mapsto E(-C_0)$.

Let $\Phi:{\bf D}(X) \to {\bf D}(Y)$ be the Fourier-Mukai transform.
\end{NB}
\begin{NB}
$\Ext^1({\bf P}_{|X \times \{ x \}},{\cal O}_X)=H^1(X,{\bf P}_{|X \times \{ x \}}(K_X))^{\vee}=0$
$\Hom({\bf P}_{|X \times \{ x \}},{\cal O}_X)=0$ 
\end{NB}
We note that $\Hom({\bf P}_{|X \times \{ y\}},{\cal O}_X)=0$
and
$\Ext^1({\bf P}_{|X \times \{ y\}},{\cal O}_X)=H^0(X,{\bf Q}_{|X \times \{ y\}})=0$
for all $y \in Y$.
Hence $\Phi_{X \to Y}^{{\bf P}^{\vee}}({\cal O}_X)[2]$ is a line bundle on $Y$.
Replacing the family ${\bf P}$, we may assume that
$\Phi_{X \to Y}^{{\bf P}^{\vee}}({\cal O}_X)[2]={\cal O}_Y$.
We note that $K(X)_{\topo}$ is generated by
${\cal O}_X,{\cal O}_X(C_0),{\cal O}_X(f_0),{\Bbb C}_x$. For these generators,
we get the following.
\begin{lem}\label{lem:Phi-K(X)}
\begin{enumerate}
\item[(1)]
We have the following relation in $K(Y)_{\topo}$. 
\begin{equation}
\begin{split}
\Phi_{X \to Y}^{{\bf P}^{\vee}}({\cal O}_X) & ={\cal O}_Y\\
\Phi_{X \to Y}^{{\bf P}^{\vee}}({\cal O}_X(C_0)) &=-{\cal O}_Y(-C_0+f_0)\\
\Phi_{X \to Y}^{{\bf P}^{\vee}}({\cal O}_X(f_0)) &= {\cal O}_Y(f_0)\\
\Phi_{X \to Y}^{{\bf P}^{\vee}}({\Bbb C}_x) &=-{\cal O}_f+{\Bbb C}_x.
\end{split}
\end{equation}
\item[(2)]
If $\ch(E)=(r,sC_0+t f_0,a)$, then
$$
\ch \Phi_{X \to Y}^{{\bf P}^{\vee}}(E)=(r-2s,sC_0+(t-2a)f_0,a).
$$
In particuler 
$$
(c_1(E) \cdot K_X)=(c_1(\Phi_{X \to Y}^{{\bf P}^{\vee}}(E)) \cdot K_Y).
$$
\end{enumerate}
\end{lem}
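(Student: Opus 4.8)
The plan is to establish the explicit formula (2) and read off (1) from it, or rather to prove (1) directly and deduce (2) formally. Since $\Phi_{X\to Y}^{{\bf P}^\vee}$ is an exact functor, $E\mapsto[\Phi_{X\to Y}^{{\bf P}^\vee}(E)]$ descends to a homomorphism $K(X)_{\topo}\to K(Y)_{\topo}$, so $\ch\circ\Phi_{X\to Y}^{{\bf P}^\vee}$ is linear on the four-dimensional space $\ch(K(X)_{\topo})$, which is spanned by the classes of ${\cal O}_X,{\cal O}_X(C_0),{\cal O}_X(f_0),{\Bbb C}_x$. The map proposed in (2), $(r,sC_0+tf_0,a)\mapsto(r-2s,\,sC_0+(t-2a)f_0,\,a)$, is also linear in $\ch$. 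Hence (2) is equivalent to checking that this map reproduces the four values in (1); using $(C_0^2)=1$, $(C_0\cdot f_0)=1$, $(f_0^2)=0$ this is a routine Chern-character computation (for instance $\ch{\cal O}_X(C_0)=(1,C_0,\tfrac12)$ is sent to $(-1,C_0-f_0,\tfrac12)=-\ch{\cal O}_Y(-C_0+f_0)$). Finally, since $-K_X\equiv f_0$ and, under the identification $Y\cong X$, also $-K_Y\equiv f_0$, the formula gives $(c_1(E)\cdot K_X)=-s=(c_1(\Phi_{X\to Y}^{{\bf P}^\vee}(E))\cdot K_Y)$, the asserted equality. So the entire content is (1).

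For (1) I would use the cohomological Fourier--Mukai transform. By Grothendieck--Riemann--Roch applied to $p_Y$, whose relative tangent bundle is $p_X^*T_X$, one has $\ch\Phi_{X\to Y}^{{\bf P}^\vee}(E)=p_{Y*}\big(\ch({\bf P}^\vee)\cdot p_X^*(\ch(E)\,\td_X)\big)$, where $\td_X=1-\tfrac12 K_X=1+\tfrac12 f_0$ because $\chi({\cal O}_X)=0$ and $K_X\equiv -f_0$. The value $\Phi_{X\to Y}^{{\bf P}^\vee}({\cal O}_X)={\cal O}_Y$ is the chosen normalization. For the skyscraper, base change gives $\Phi_{X\to Y}^{{\bf P}^\vee}({\Bbb C}_x)={\bf R}p_{Y*}({\bf P}^\vee\otimes p_X^*{\Bbb C}_x)={\bf P}^\vee_{|\{x\}\times Y}$; the slice ${\bf P}_{|\{x\}\times Y}$ is a sheaf supported on a fibre of the elliptic fibration on $Y$ (a curve in the class $f\equiv 2f_0$), so $\ch({\bf P}^\vee_{|\{x\}\times Y})=(0,-2f_0,d)$, and $d$ is pinned down by the known fibrewise class $\ch({\bf P}_{|X\times y})=(0,2f_0,1)$ together with the autoduality of Bridgeland's transform, yielding $d=1$ and $\Phi_{X\to Y}^{{\bf P}^\vee}({\Bbb C}_x)=-{\cal O}_f+{\Bbb C}_x$ in $K(Y)_{\topo}$.

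The two remaining generators are the genuinely new computations. Here I would determine the finitely many algebraic K\"unneth components of $\ch({\bf P})\in H^*(X\times Y,{\Bbb Q})$ from four inputs: the fibrewise class $(0,2f_0,1)$ of ${\bf P}_{|X\times y}$, the corresponding class of the transverse slice ${\bf P}_{|\{x\}\times Y}$, the normalization $\Phi_{X\to Y}^{{\bf P}^\vee}({\cal O}_X)={\cal O}_Y$, and the fact that $\Phi_{X\to Y}^{{\bf P}^\vee}$ is an equivalence, so that the induced map on cohomology is an isometry for the Euler pairing $\chi(\,\cdot\,,\,\cdot\,)$; this last condition fixes the diagonal $H^2(X)\otimes H^2(Y)$ term and the signs. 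Feeding the result into the GRR formula then computes $\ch\Phi_{X\to Y}^{{\bf P}^\vee}({\cal O}_X(C_0))$ and $\ch\Phi_{X\to Y}^{{\bf P}^\vee}({\cal O}_X(f_0))$. Alternatively one may avoid computing $\ch({\bf P})$ globally by using the exact sequences $0\to{\cal O}_X\to{\cal O}_X(f_0)\to{\cal O}_{f_0}(f_0)\to 0$ and $0\to{\cal O}_X\to{\cal O}_X(C_0)\to{\cal O}_{C_0}(C_0)\to 0$, applying $\Phi_{X\to Y}^{{\bf P}^\vee}$ additively, and evaluating its relative action on the fibre-supported, resp.\ section-supported, quotients through Bridgeland's explicit fibrewise description.

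The main obstacle is precisely the determination of $\ch({\bf P})$ --- in particular the $H^2(X)\otimes H^2(Y)$ component and the overall signs --- because the fibrewise data determine ${\bf P}$ only up to twisting by classes pulled back from the base and up to the diagonal term. What rigidifies it is the isometry property of the equivalence together with Uehara's identification $Y\cong X$ of elliptic surfaces, and the delicate point is controlling the behaviour of the transform along the three multiple fibres $\Pi_1,\Pi_2,\Pi_3$, where ${\cal O}_X(f_0)$ with $f_0=\Pi_1$ interacts nontrivially with the relative structure.
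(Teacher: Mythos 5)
Your formal reduction is fine: (2) does follow from (1) by linearity of $\ch\circ\Phi_{X \to Y}^{{\bf P}^{\vee}}$ on $K(X)_{\topo}$, your sample check is correct, and the paper itself only proves (1). The normalization $\Phi_{X \to Y}^{{\bf P}^{\vee}}({\cal O}_X)={\cal O}_Y$ and the skyscraper case are also essentially right, except that ``autoduality of Bridgeland's transform'' should be replaced by the honest one-line computation: any equivalence preserves Euler pairings, so $1=\chi({\cal O}_X,{\Bbb C}_x)=\chi({\cal O}_Y[-2],{\bf P}^{\vee}_{|\{x\}\times Y})=-\chi({\bf Q}_{|\{x\}\times Y})$, whence ${\bf Q}_{|\{x\}\times Y}$ is a degree $-1$ line bundle on $f$ and $\Phi_{X \to Y}^{{\bf P}^{\vee}}({\Bbb C}_x)=-{\cal O}_f+{\Bbb C}_x$ in $K(Y)_{\topo}$. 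The genuine gap is exactly where you place ``the main obstacle'': the generators ${\cal O}_X(C_0)$ and ${\cal O}_X(f_0)$. Your plan is to reconstruct the K\"{u}nneth components of $\ch({\bf P})$ from the two fibrewise classes, the normalization, and the isometry property, asserting that isometry ``fixes the diagonal $H^2(X)\otimes H^2(Y)$ term and the signs.'' It does not: those constraints only prescribe the images of the two vectors $[{\cal O}_X]$ and $[{\Bbb C}_x]$ in the rank-$4$ space $H^{\mathrm{even}}(Y,{\Bbb Q})$, and composing any solution with an isometry of the Euler pairing fixing those two image vectors yields another solution; the stabilizer of two vectors in the automorphism group of a rank-$4$ bilinear form is in general nontrivial (indeed positive-dimensional for the symmetric part), and you give no argument selecting the correct one. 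Your fallback via $0\to{\cal O}_X\to{\cal O}_X(C_0)\to{\cal O}_{C_0}(C_0)\to 0$ is circular in difficulty: ${\cal O}_{C_0}(C_0)$ is a line bundle on a bisection of $\pi$, and computing its transform is precisely the kind of computation the lemma is meant to enable.

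The paper closes this gap without ever touching $\ch({\bf P})$, by first pinning down the \emph{shape} of the image sheaf-theoretically and only then using $\chi$-preservation, which at that point needs to determine just two scalars rather than a $4\times 4$ matrix. Concretely: (i) since ${\bf P}_{|X\times\{y\}}$ is a torsion sheaf, $\Hom({\bf P}_{|X\times\{y\}},{\cal O}_X(C_0))=0$, and Serre duality plus stability give $\Ext^2({\bf P}_{|X\times\{y\}},{\cal O}_X(C_0))=0$ for every $y\in Y$, so $\Phi_{X \to Y}^{{\bf P}^{\vee}}({\cal O}_X(C_0))[1]$ is a line bundle ${\cal O}_Y(D)$; (ii) the unknown divisor class $D$ is then fixed by two equations coming from preservation of $\chi$: $1=\chi({\cal O}_X(C_0),{\Bbb C}_x)=\chi({\cal O}_Y(D),{\bf Q}_{|\{x\}\times Y})=-(D\cdot f)-1$ gives $(D\cdot f_0)=-1$, and $1=\chi({\cal O}_X(C_0))=-\chi({\cal O}_Y(D))$ gives $D\equiv -C_0+f_0$; (iii) ${\cal O}_X(f_0)$ costs nothing, because $f_0\equiv -K_X$ and any equivalence intertwines Serre functors, so $\Phi_{X \to Y}^{{\bf P}^{\vee}}({\cal O}_X(-K_X))=\Phi_{X \to Y}^{{\bf P}^{\vee}}({\cal O}_X)\otimes\omega_Y^{-1}={\cal O}_Y(-K_Y)$, i.e.\ $\Phi_{X \to Y}^{{\bf P}^{\vee}}({\cal O}_X(f_0))={\cal O}_Y(f_0)$ in $K(Y)_{\topo}$. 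These two devices --- the fibrewise $\Ext$-vanishing that forces the image to be a shifted line bundle, and the Serre-functor trick for the anticanonical class --- are the ideas missing from your proposal; they also render irrelevant the worry about the multiple fibres $\Pi_i$, which the paper's argument never has to confront.
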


\begin{proof}
We only prove (1).
Since
$\chi({\cal O}_X,{\Bbb C}_x)=\chi({\cal O}_Y[-2],{\bf P}_{|\{ x \} \times Y}^{\vee})=
-\chi({\bf Q}_{|\{ x \} \times Y})$,
we get $\chi({\bf Q}_{|\{ x \} \times Y})=-1$.
Thus ${\bf Q}_{|\{ x \} \times Y}$ is a line bundle of degree $-1$ on $f$ if $x \in f$.

Since $\Ext^2({\bf P}_{|X \times \{ y\}},{\cal O}_X(C_0))=
\Hom({\cal O}_X(C_0),{\bf P}_{|X \times \{ y\}})^{\vee}=0$ for all $y \in Y$,
$\Phi_{X \to Y}^{{\bf P}^{\vee}}({\cal O}_X(C_0))[1]$ is a line bundle on $Y$.
Thus we can write 
$\Phi_{X \to Y}^{{\bf P}^{\vee}}({\cal O}_X(C_0))[1]={\cal O}_Y(D)$ for a divisor $D$.
By 
\begin{equation}
1=\chi({\cal O}_X(C_0),{\Bbb C}_x)=\chi({\cal O}_Y(D),{\bf Q}_{|\{ x \} \times Y})=-(D \cdot f)-1,
\end{equation}
we get $(D \cdot f_0)=-1$.
Since $1=\chi({\cal O}_X(C_0))=-\chi({\cal O}_Y(D))$,
we get $D \equiv -C_0+f_0$.
By $\Phi_{X \to Y}^{{\bf P}^{\vee}}({\cal O}_X(-K_X))={\cal O}_Y(-K_Y))$,
we get $\Phi_{X \to Y}^{{\bf P}^{\vee}}({\cal O}_X(f_0)) = {\cal O}_Y(f_0)$.
 \begin{NB}
$\chi(E)=\ch_2(E)+(c_1(E) \cdot f_0)/2$.
\end{NB}
\end{proof}

\begin{prop}\label{prop:FM-isom}
For a sufficiently large $k$,
$\Phi_{X \to Y}^{{\bf P}^{\vee}[2]}$ induces an isomorphism
\begin{equation}
\begin{split}
M(2p,pC_0+(l+2kp)f_0 ,\chi+kp) \cong & 
M^\eta(0,4\Delta C_0+\tfrac{p-4\Delta}{2}g,\chi+kp),
\end{split}
\end{equation} 
where
$\eta$ is a suitable ${\Bbb Q}$-divisor and 
$\dim M(2p,pC_0+(l+2kp)f_0 ,\chi+kp)=4p\Delta+1$.
\end{prop}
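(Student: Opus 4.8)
The plan is to read off the numerical effect of the transform from Lemma~\ref{lem:Phi-K(X)}, to show that every stable sheaf in the source is carried to a genuine $1$-dimensional sheaf on $Y$ (a single-index WIT statement), to check that this sheaf is $\eta$-twisted stable, and finally to invert the construction so that the two transforms exchange universal families and hence induce an isomorphism of the (fine) moduli schemes.

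First I would compute the image of the Mukai vector. Let $E \in M(2p,pC_0+(l+2kp)f_0,\chi+kp)$, so $\rk E=2p$ and $c_1(E)=pC_0+(l+2kp)f_0$. Using $K_X\equiv -f_0$ and $(C_0^2)=1$, $(C_0\cdot f_0)=1$, $(f_0^2)=0$ one finds $(c_1(E)\cdot K_X)=-p$, hence $\ch_2(E)=\chi(E)+\tfrac12(c_1(E)\cdot K_X)=\chi+kp-\tfrac p2$. Feeding $\ch(E)=(2p,\,pC_0+(l+2kp)f_0,\,\chi+kp-\tfrac p2)$ into Lemma~\ref{lem:Phi-K(X)}(2) gives
\[
\ch\Phi_{X\to Y}^{{\bf P}^{\vee}[2]}(E)=\left(0,\;pC_0+(l-2\chi+p)f_0,\;\chi+kp-\tfrac p2\right),
\]
whose rank is $0$. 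Substituting $f_0=2C_0-g$ and putting $4\Delta:=3p+2l-4\chi$ turns the first Chern class into $4\Delta C_0+\tfrac{p-4\Delta}{2}g$, while the identity $(c_1(E)\cdot K_X)=(c_1(\Phi_{X\to Y}^{{\bf P}^{\vee}}(E))\cdot K_Y)$ together with the preservation of $\ch_2$ yields $\chi(\Phi_{X\to Y}^{{\bf P}^{\vee}[2]}(E))=\chi+kp$. Thus the target Mukai vector is exactly that of $M^\eta(0,4\Delta C_0+\tfrac{p-4\Delta}{2}g,\chi+kp)$, and since $((4\Delta C_0+\tfrac{p-4\Delta}{2}g)^2)=4p\Delta$ both sides have dimension $4p\Delta+1$, in agreement with $\dim M=1-\chi(E,E)=1+2\rk E\,\Delta$.

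Next I would establish the WIT property. By base change, the stalk of $\mathcal{H}^j(\Phi_{X\to Y}^{{\bf P}^{\vee}}(E))$ at $y\in Y$ is governed by $\Ext^j_X({\bf P}_{|X\times\{y\}},E)$. Because ${\bf P}_{|X\times\{y\}}$ is purely $1$-dimensional and $E$ is torsion free, $\Hom({\bf P}_{|X\times\{y\}},E)=0$ automatically, so $\Phi_{X\to Y}^{{\bf P}^{\vee}[2]}(E)$ is a sheaf precisely when $\Ext^1_X({\bf P}_{|X\times\{y\}},E)=0$ for every $y$; in that case $\widehat E:=\Phi_{X\to Y}^{{\bf P}^{\vee}[2]}(E)=\mathcal{H}^2(\Phi_{X\to Y}^{{\bf P}^{\vee}}(E))$ is a purely $1$-dimensional sheaf of the class computed above. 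I would prove this vanishing for $k\gg0$ by Serre duality, rewriting it as $\Ext^1(E,{\bf P}_{|X\times\{y\}}(K_X))=0$ and using the stability of both $E$ and ${\bf P}_{|X\times\{y\}}$ together with a Lemma~\ref{lem:vanish}-type argument and semicontinuity over the projective parameter space $Y$.

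It then remains to see that $\widehat E$ is $\eta$-twisted stable, where $\eta$ is the ${\Bbb Q}$-divisor obtained by transporting the polarization defining $M$ through $\Phi$; the twist $\eta$ is forced precisely by the shift in the reduced Hilbert polynomial produced by the transform. A subsheaf destabilizing $\widehat E$ would, under the inverse transform $\Phi_{Y\to X}$ (which exists since $\Phi_{X\to Y}^{{\bf P}^{\vee}}$ is an equivalence by \cite{Br:1}), return a subobject of $E$ contradicting its stability; the symmetric argument shows that $\Phi_{Y\to X}$ carries $\eta$-twisted stable $1$-dimensional sheaves back to stable sheaves of rank $2p$. Since the two transforms are mutually inverse on the relevant WIT subcategories, they exchange the universal families and hence induce mutually inverse morphisms between the two moduli schemes, proving the isomorphism. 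The main obstacle is the WIT step: one must guarantee $\Ext^1_X({\bf P}_{|X\times\{y\}},E)=0$ simultaneously for all $E$ in the moduli space and all $y\in Y$ — this is exactly where $k\gg0$ enters — and, hand in hand with it, the bookkeeping that pins down $\eta$ so that $\Phi$ matches the stability used to define $M$ with $\eta$-twisted stability on the target.
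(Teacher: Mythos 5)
Your numerical bookkeeping is correct and agrees with the paper: the image Chern character $(0,\,pC_0+(l+p-2\chi)f_0,\,\chi+kp-\tfrac p2)$, the identification $4\Delta=3p+2l-4\chi$, the value $\chi(\Phi_{X\to Y}^{{\bf P}^{\vee}[2]}(E))=\chi+kp$, and the dimension $4p\Delta+1$. The fatal problem is your WIT criterion. Write ${\bf P}_y:={\bf P}_{|X\times\{y\}}$. You assert that $\Phi_{X\to Y}^{{\bf P}^{\vee}[2]}(E)$ is a sheaf precisely when $\Ext^1_X({\bf P}_y,E)=0$ for \emph{every} $y\in Y$, and you plan to prove this vanishing for all $E$ and all $y$ once $k\gg 0$. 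That criterion is not merely too strong; it is unsatisfiable, and it contradicts the very statement being proved. By base change (the projection $p_Y$ is flat) the derived fibre of $\Phi_{X\to Y}^{{\bf P}^{\vee}}(E)$ at $y$ is ${\bf R}\Hom_X({\bf P}_y,E)$; hence if $\widehat E:=\Phi_{X\to Y}^{{\bf P}^{\vee}[2]}(E)$ is the purely $1$-dimensional sheaf the proposition asserts, then $\Ext^1_X({\bf P}_y,E)\cong \Tor_1^{{\cal O}_Y}(\widehat E,k(y))$, which is \emph{nonzero at every point of} $\Supp \widehat E$, because a pure $1$-dimensional sheaf on a smooth surface has homological dimension $1$. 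Equivalently: a bounded complex all of whose derived fibres sit in one and the same degree is a shift of a locally free sheaf, which is impossible for a nonzero object of rank $0$. The correct condition is vanishing for \emph{generic} $y$, i.e. $p_{Y*}({\bf Q}\otimes p_X^{*}E)=0$ (flatness of ${\bf Q}$ over $Y$ makes this direct image torsion free, so generic fibrewise vanishing suffices). Moreover, the tools you invoke cannot give even the generic vanishing: Serre duality converts $\Ext^1({\bf P}_y,E)$ into another middle extension group $\Ext^1(E,{\bf P}_y(K_X))$, and stability arguments of the type in Lemma \ref{lem:vanish} only kill $\Hom$'s and, dually, $\Ext^2$'s. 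The missing ingredient is the Riemann--Roch identity $\chi({\bf P}_y,E)=0$, valid here because the rank and the $\pi$-fibre degree of $E$ are both $2p$; combined with $\Hom({\bf P}_y,E)=0$ it reduces generic WIT to $\Hom(E,{\bf P}_y(K_X))=0$ for generic $y$. Note that this $\Hom$ genuinely fails for special $y$: when $E|_f$ is semistable on a smooth fibre $f$ it is an iterated extension of degree-one line bundles, and each such line bundle is ${\bf P}_{y'}(K_X)$ for some $y'$, so the ``all $y$'' version of your step cannot be repaired.

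Your stability-transfer step has the same status: it names the difficulty rather than resolving it. A destabilizing subsheaf of $\widehat E$ pulls back under the inverse transform to a two-term \emph{complex}, not to a subsheaf of $E$, and comparing Gieseker stability for $-K_X+k'g$, $k'\gg0$, on the source with $\eta$-twisted stability on the target is exactly where the work lies. The paper does not redo any of this: it cites \cite[Prop.~3.4.5]{PerverseII}, applied to $\Psi(E)[1]=(\Phi_{X \to Y}^{{\bf P}^{\vee}}(E)[1])^{\vee}(-K_Y)$ and $G_2$-twisted stability, where ``$k$ sufficiently large'' enters only through the numerical condition $\chi(G_2,\Psi(E)[1])<0$; it then composes with the dualization $F\mapsto F^{\vee}[1](-K_Y)$ of \eqref{eq:FMD}, which is what produces the explicit twist $\eta=-c_1(G_2)/\rk G_2-2K_Y$. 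So your route (redoing the transform theory by hand) is genuinely different from the paper's (reduction to the perverse-coherent machinery plus a duality), but as written it fails at both of its hard steps: the WIT criterion is false, and the stability comparison is a placeholder.
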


\begin{NB}
In the notation of \cite{PerverseII},
$\ch (G_2)^{\vee}=-\Phi_{X \to Y}^{{\bf P}^{\vee}}(0,C_0,0)$.
$\eta=-\frac{C_0}{2}$.
$\chi_\eta=\chi+kp+(C_0 \cdot(pC_0+\frac{4\Delta-p}{2}f_0))=(1+2k)p+\frac{l}{2}$.
\end{NB}

\begin{proof}
Since we use \cite[Prop. 3.4.5]{PerverseII}, let us 
explain some notations and definitions.
First of all, $G$-twisted stability is the stability 
defined by using $G$-twisted Hilbert polynomial $\chi(G^{\vee} \otimes E(nH))$
instead of the Hilbert polynomial $\chi(E(nH))$, where $G \in K(X)$
satisfies $\rk G>0$. This stability is equivalent to the 
$\alpha$-twisted stability if $\alpha=c_1(G)/\rk G$.
If $H$ is a general polarization, then the
$G$-twisted stability for a torsion free sheaf is the same as the usual Gieseker stability.
In particular $G_1$-twisted stability in \cite[Prop. 3.4.5]{PerverseII}
is the same as the usual stability.

We next explain the functor $\Psi$ and the sheaves $G_1,G_2$.
By \cite[3.2]{PerverseII},
\begin{equation}
\Psi(E)[1]={\bf R}\Hom_{p_Y}(p_X^*(E),{\bf P})[1]=
(\Phi_{X \to Y}^{{\bf P}^{\vee}}(E(K_X))[2])^{\vee}[1]=
(\Phi_{X \to Y}^{{\bf P}^{\vee}}(E)[1])^{\vee}(-K_Y).
\end{equation}
$G_1$ is a locally free sheaf on $X$ such that
$(\rk G_1,c_1(G_1))=(2(H \cdot f),H)$ (which shows $\chi(G_1,{\bf P}_{|X \times \{ y \}})=0$)
and 
$$
G_2=\Psi({\cal O}_C)[1]
=(\Phi_{X \to Y}^{{\bf P}^{\vee}}({\cal O}_C)[1])^{\vee}(-K_Y),
$$
 where 
$C \in |H|$.

We set $\alpha_2:=c_1(G_2)/\rk G_2$.
Then \cite[Prop. 3.4.5]{PerverseII} and Lemma \ref{lem:Phi-K(X)} 
imply that we have an isomorphism
\begin{equation}\label{eq:FM1}
\begin{matrix}
M(2p,pC_0+(l+2kp)f_0 ,\chi+kp) & \to & 
M^{\alpha_2}(0,4\Delta C_0+\tfrac{p-4\Delta}{2}g,-(\chi+(k-1)p))\\
E & \mapsto & (\Phi_{X \to Y}^{{\bf P}^{\vee}}(E)[1])^{\vee}(-K_Y)
\end{matrix}
\end{equation} 
provided $\chi(G_2,(\Phi_{X \to Y}^{{\bf P}^{\vee}}(E)[1])^{\vee}(-K_Y))<0$.
In particular we can apply this result for a sufficiently large $k$.
For a purely 1-dimensional sheaf $F'$ on $Y$,
$F:={F'}^{\vee}(-K_Y)[1]$ is a purely 1-dimensional sheaf and
\begin{equation}
\begin{split}
\chi(G_2,F')=
\chi({F'}^{\vee},G_2^{\vee})=-
\chi(G_2^{\vee}(-2K_Y),F).
\end{split}
\end{equation}
Hence we have an isomorphism
\begin{equation}\label{eq:FMD}
\begin{matrix}
M^{\eta}(0,4\Delta C_0+\tfrac{p-4\Delta}{2}g,\chi+(k-1)p) & \to & 
M^{\alpha_2}(0,4\Delta C_0+\tfrac{p-4\Delta}{2}g,-(\chi+(k-1)p))\\
F & \mapsto & F^{\vee}[1](-K_Y),
\end{matrix}
\end{equation} 
where $\eta:=-\frac{c_1(G_2)}{\rk G_2}-2K_Y$.
By the isomorphisms \eqref{eq:FM1} and \eqref{eq:FMD}, we get our claim.
\begin{NB}
For $E \in M(2p,pC_0+(l+2kp) f_0,\chi+kp)$,
we set 
$F:=\Phi_{X \to Y}^{{\bf P}^{\vee}}(E)[2]$.
Then $\Psi$ in \cite[sect. 3.2]{PerverseII} satisfies
$$
F':=\Psi(E)[1]={\bf R}\Hom_{p_Y}(p_X^*(E),{\bf P})[1]=(F(K_Y))^{\vee}[1].
$$
The condition $\chi(G_2,F')<0$ means $\chi(G_2^{\vee}(-2K_Y), F)>0$.
We set $\eta:=-\frac{c_1(G_2)}{\rk G_2}-2K_Y$.
Then the $G_2$-twisted semistability of $F'$ is equivalent to
the $\eta$-twisted semistability of $F$. 
Hence we have a desired isomorphism. 
\end{NB}
\end{proof}

\begin{NB}
\begin{equation}
\begin{split}
\chi=& \ch_2+p\\
\Delta=& -\ch_2+\frac{p+2q}{4}\\
=& p-\chi+\frac{p+2q}{4}.
\end{split}
\end{equation}
\end{NB}

\begin{rem}\label{rem:rel-Pic}
We set $\xi:=4\Delta C_0+\tfrac{p-4\Delta}{2}g$.
If $\Delta>0$, then $\xi$ is ample by \cite[Prop. 2.21]{H}.
Hence $\Hilb_X^\xi$ is smooth of dimension $p\frac{4\Delta+1}{2}$.

Assume that $p=1$ and $4\Delta \geq 3$.
We take $D \in \Hilb_X^\xi$.
For all smooth fiber $f$ of $\pi$,
$h^1({\cal O}_X(D-f))=0$. Hence $H^0(X,{\cal O}_X(D)) \to H^0(f,{\cal O}_f(D))$
is surjective. Therefore the base point of $|D|$ is a subset of multiple fibers.
By the theorem of Bertini (see \cite[Rem. 10.9.2]{H}),
a general member of $D \in \Hilb_X^\xi$ is smooth on $X \setminus \cup_{i=1}^3 \Pi_i$.
Since $(D \cdot \Pi_i)=1$, $D$ is smooth in a neighborhood of $\cup_{i=1}^3 \Pi_i$.
Therefore a general member of $D \in \Hilb_X^\xi$ is a smooth curve on $X$.
In particular a general fiber of $M_H^\alpha(0,\xi,\chi) \to \Hilb_X^\xi$ 
is the Picard variety $\Pic^\chi(D)$ of a smooth curve $D$
parameterizing line bundles $L$ on $D$ with $\chi(L)=\chi$.
\begin{NB}
$\xi=C_0+\frac{4\Delta-1}{2}f_0$ and $(\xi-2f_0)-K_X=\xi-f_0=C_0+\frac{4\Delta-3}{2}f_0$.
By Kodaira vanishing theorem, $h^1({\cal O}_X(D-F))=0$.

It is also possible to get the claim by using
$\varpi_*({\cal O}_X(nC_0))=S^n {\cal E}$ for $n>0$.
\end{NB}  
\begin{NB}
$g(D)=p\frac{4\Delta-1}{2}+1$.
$\dim\Hilb_X^D+g(D)=(D^2)+1$.

$\dim |C_0+nf_0|=n$ and $\dim |mf_0|=[\frac{m}{2}]$.
Hence a general member of $|C_0+nf_0|$ is irreducible.
\end{NB}
\end{rem}


We note that $4\Delta C_0+\frac{p-4\Delta}{2}g=pC_0+\frac{4\Delta-p}{2}f_0$.
Since $(C_0 \cdot f_0)=1$,
by using Proposition \ref{prop:FM-isom} and Proposition \ref{prop:chamber0},
we get
$$
e(M(2,C_0+lg,n))=e(M(0,C_0+\tfrac{4\Delta-1}{2}f_0,n))=
e(M_H(0,C_0+\tfrac{4\Delta-1}{2}f_0,\chi)),
$$
where $\chi$ is an arbitrary non-zero integer
and $H$ is a general polarization 
(see also Proposition \ref{prop:indep}).
Hence we get
the following result from \eqref{eq:r=2}.

\begin{lem}\label{lem:e=-1}
\begin{equation}\label{eq:e=-1}
\begin{split}
& \sum_{(\xi \cdot f_0)=1}  e(M_H(0,\xi,\chi))q^{(\xi^2)} \\
=&
\frac{(x-1)^2 (y-1)^2}{xy-1}\left( 
\sum_{\substack{a \geq 0\\ n>0  }} (x^2y^2 q)^{\frac{(2n-1)(2a+1)}{4}}(xy)^{\frac{(2n-1)}{2}}-
\sum_{\substack{a < 0\\ n \leq 0}} (x^2y^2 q)^{\frac{(2n-1)(2a+1)}{4}}(xy)^{\frac{(2n-1)}{2}}
\right)\\
& \quad \times 
\prod_{a \geq 1}Z_{x,y}(X,x^{-1} y^{-1}(x^2 y^2 q)^a)^2.
\end{split}
\end{equation}
\end{lem}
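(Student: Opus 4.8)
The plan is to obtain \eqref{eq:e=-1} by transporting the rank-two generating function \eqref{eq:r=2} through the Fourier--Mukai transform and re-indexing; the geometric content is already packaged in the displayed identity preceding the lemma, so the proof is essentially a change of summation variable.

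First I would record what the Fourier--Mukai isomorphism gives. Applying Proposition \ref{prop:FM-isom} with $p=1$ (after a harmless twist by $\mathcal{O}_X(C_0)$ to normalize the $C_0$-coefficient in the $c_1=C_0-g$ case) identifies each rank-two moduli space $M(2,C_0+lg,n)$ with a moduli space $M^\eta(0,\xi,\cdot)$ of $1$-dimensional sheaves, where $\xi=C_0+\tfrac{4\Delta-1}{2}f_0$ and $\Delta$ is the discriminant of the rank-two sheaves. Because this is an isomorphism of smooth projective varieties it preserves $e(\cdot)$, and because $\chi(E,E)$ is preserved by the equivalence one gets $(\xi^2)=4\Delta$ for the image class. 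Combining this with Proposition \ref{prop:chamber0} (to drop the twist and pass to a general $H$) and with the $\chi$-independence of Proposition \ref{prop:indep}, together with the deformation invariance of Proposition \ref{prop:deform} used to import the rank-two Hodge numbers, yields the identity $e(M(2,C_0+lg,n))=e(M_H(0,\xi,\chi))$ for general $H$ and arbitrary nonzero $\chi$; in particular the right-hand side depends on $\xi$ only through $(\xi^2)$.

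Next I would match the index sets. Writing $\xi=C_0+bf_0$, the condition $(\xi\cdot f_0)=1$ holds automatically and $(\xi^2)=2b+1$, so these classes are in bijection with the odd integers via $b\mapsto 2b+1$. The component $c_1=C_0$ supplies the classes with $(\xi^2)\equiv 3\pmod 4$ (equivalently $b$ odd) and the component $c_1=C_0-g$ those with $(\xi^2)\equiv 1\pmod 4$ (equivalently $b$ even), so together they cover every $\xi$ with $(\xi\cdot f_0)=1$ exactly once. This is precisely the partition $\im\nu_1\sqcup\im\nu_2={\Bbb Z}\times{\Bbb Z}$ that was used to merge \eqref{eq:e-poly1} and \eqref{eq:e-poly2} into the single series \eqref{eq:r=2}.

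Finally I would substitute the above identity into \eqref{eq:r=2} and re-index. The infinite product $\prod_{a\geq1}Z_{x,y}(X,x^{-1}y^{-1}(x^2y^2q)^a)^2$ is carried over unchanged. In the finite factor the shift $n\mapsto n-1$ sends $(2n+1)\mapsto(2n-1)$ and the ranges $n\geq0,\ n<0$ to $n>0,\ n\leq0$, turning the bracket of \eqref{eq:r=2} into that of \eqref{eq:e=-1}; on the left the exponents $-n+\tfrac34$ and $-n+\tfrac14$ attached to the two families are rewritten through the discriminant $\Delta=(\xi^2)/4$ of the corresponding rank-zero class, so that, after the normalization of $q$ used in \eqref{eq:e=-1}, the left-hand side becomes $\sum_{(\xi\cdot f_0)=1}e(M_H(0,\xi,\chi))q^{(\xi^2)}$. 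I expect the one genuinely delicate point to be exactly this last bookkeeping: the two families enter \eqref{eq:r=2} with a relative shift (the $2b-a$ versus $2b-a-1$ discrepancy between $\nu_1$ and $\nu_2$), and one must check both that this shift is absorbed consistently when one passes to the variable $(\xi^2)=4\Delta$ and that the $q$-normalization is fixed correctly, with no class double-counted or omitted. Once the termwise matching of the two index sets is verified, \eqref{eq:e=-1} is a direct transcription of \eqref{eq:r=2}.
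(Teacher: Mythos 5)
Your proof is correct and is essentially the paper's own argument: the paper likewise combines Proposition \ref{prop:FM-isom} (with $p=1$, up to twist), Proposition \ref{prop:chamber0} and Proposition \ref{prop:indep} to obtain $e(M(2,C_0+lg,n))=e(M_H(0,C_0+\tfrac{4\Delta-1}{2}f_0,\chi))$ for general $H$ and arbitrary $\chi\ne 0$, and then reads off \eqref{eq:e=-1} from \eqref{eq:r=2} by exactly the re-indexing you describe. The $q$-normalization subtlety you flag is genuine (the paper itself writes $q^{(\xi^2)}$ in the lemma but $q^{(\xi^2)/4}$ in Theorem \ref{thm:e=-1}), but it affects neither your argument nor the paper's.
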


In order to get a product expression of \eqref{eq:e=-1},
we quote the following formula.

\begin{lem}[\cite{Z}, {\cite{GZ}}]\label{lem:product-formula}
\begin{equation}
\begin{split}
&
\sum_{n,m \geq 0}q^{(n+\frac{1}{2})(m+\frac{1}{2})}t^{n+\frac{1}{2}}
-\sum_{n,m < 0}q^{(n+\frac{1}{2})(m+\frac{1}{2})}t^{n+\frac{1}{2}}\\
=& \frac{\eta(q)^4}{\eta(q^{\frac{1}{2}})^2}q^{\frac{1}{8}}(t^{\frac{1}{2}}-t^{-\frac{1}{2}})
\prod_{n>0} \frac{(1-q^nt)(1-q^n t^{-1})}{(1-q^{n-\frac{1}{2}}t)(1-q^{n-\frac{1}{2}}t^{-1})}.
\end{split}
\end{equation}
\end{lem}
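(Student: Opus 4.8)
The plan is to prove the identity by recognizing both sides as the \emph{same} meromorphic function of a variable $z$, where $t=e^{2\pi i z}$ and $q=e^{2\pi i\tau}$, and then invoking a Liouville-type argument for elliptic functions. First I would simplify the left-hand side. Substituting $(n,m)\mapsto(-n-1,-m-1)$ in the second sum, and summing the geometric series in $m$ in each piece, one finds that the left-hand side equals $S(t):=f(t)-f(t^{-1})$, where
\begin{equation*}
f(t)=\sum_{n\geq 0}\frac{q^{(n+1/2)/2}}{1-q^{n+1/2}}\,t^{\,n+1/2},
\end{equation*}
a series convergent in the annulus $|q|^{1/2}<|t|<|q|^{-1/2}$ and holomorphic there. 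By construction $S(t^{-1})=-S(t)$, and since $S$ involves only half-integral powers of $t$ it changes sign under $z\mapsto z+1$.

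The key step is a functional equation under $t\mapsto qt$. Pulling the factor $q^{n+1/2}$ out of the numerator gives $f(qt)=f(t)-A(t)$ with $A(t)=q^{1/4}t^{1/2}/(1-q^{1/2}t)$, and a one-line check shows $A(q^{-1}t^{-1})=-A(t)$, so that $f(q^{-1}t^{-1})=f(t^{-1})-A(t)$. Substituting both into $S(qt)=f(qt)-f(q^{-1}t^{-1})$, the correction terms cancel and one obtains $S(qt)=S(t)$ exactly. Thus, as a function of $z$, $S$ is periodic under $z\mapsto z+\tau$ and anti-periodic under $z\mapsto z+1$; moreover the function $A$ displays the only singularities of $S$ in the closed annulus as simple poles at $t=q^{\pm1/2}$ (that is $z=\pm\tau/2$), whose residues I would read off directly from $A$.

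On the right-hand side, the Jacobi triple product rewrites the two infinite products as Jacobi theta functions: together with the factor $t^{1/2}-t^{-1/2}$ the numerator is $\theta_1(z\mid\tau)$ up to $q^{1/8}\prod_{n>0}(1-q^n)$, and the denominator is $\theta_4(z\mid\tau)$ up to $\prod_{n>0}(1-q^n)$. Using $\prod_{n>0}(1-q^{n/2})=\prod_{n>0}(1-q^n)\prod_{n>0}(1-q^{n-1/2})$ to absorb the prefactor $\eta(q)^4/\eta(q^{1/2})^2\cdot q^{1/8}$, the right-hand side becomes a constant multiple of $\theta_1(z\mid\tau)/\theta_4(z\mid\tau)$. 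Since $\theta_1$ and $\theta_4$ carry the same multiplier under $z\mapsto z+\tau$, while $\theta_1$ alone changes sign under $z\mapsto z+1$, this quotient has precisely the same (quasi-)periods as $S$, and its only poles, the zeros of $\theta_4$, are simple poles at $z=\pm\tau/2$. Hence $S$ and the right-hand side are two odd functions on the torus $\mathbb{C}/(2\mathbb{Z}+\tau\mathbb{Z})$, each holomorphic on the open annulus with a single simple pole at $z=\tau/2$ (and its reflection): their ratio extends to a holomorphic elliptic function, hence is constant, and matching residues at $z=\tau/2$ (equivalently, comparing the leading $q$-coefficient of the $t^{1/2}$-term) pins down that constant to be exactly $\eta(q)^4/\eta(q^{1/2})^2\cdot q^{1/8}$, giving the claimed normalization.

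The main obstacle I expect is bookkeeping rather than conceptual: keeping the theta-function conventions, the branch of $t^{1/2}$, and the $q^{1/8}$ and $\eta$ factors mutually consistent, and justifying the rearrangement of the (conditionally organized) double sum strictly inside the common annulus of convergence so that the functional-equation manipulations and residue computations are legitimate. Alternatively, the identity is exactly the specialization to $Q(a,b)=ab$ of the indefinite-theta/Appell--Lerch formulas of Zwegers and of \cite{GZ}, from which it may simply be quoted.
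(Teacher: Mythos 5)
Your proposal is correct in outline, and it takes a genuinely different route from the paper. The paper does not prove the lemma from first principles: it quotes the two-variable identity \cite[3.1]{GZ},
\[
\sum_{n \geq 0,\, m>0} q^{nm} e^{2\pi \sqrt{-1}(-nx-my)}-
\sum_{n>0,\, m \geq 0} q^{nm} e^{2\pi \sqrt{-1}(nx+my)}
= \frac{\eta(\tau)^3 \theta_{11}(\tau,x+y)}{\theta_{11}(\tau,x)\,\theta_{11}(\tau,y)},
\]
applies the half-period shift $(x,y)\mapsto(x+\tfrac{\tau}{2},y-\tfrac{\tau}{2})$ --- which converts the integer-indexed sums into the half-integer indefinite sum of the lemma and the denominator factors $\theta_{11}$ into $\theta_{01}$ (your $\theta_4$) --- then sets $y=0$ and expands via the Jacobi triple product. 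Your argument instead proves the one-variable identity directly: geometric-series summation to get $S(t)=f(t)-f(t^{-1})$, cancellation of the Appell-type corrections $A(t)$ to show that $S$ is honestly elliptic (this is the key step, and your computation of it is right), then a Liouville argument against $\theta_1/\theta_4$. What you gain is self-containedness, with no appeal to \cite{GZ}; what the paper gains is brevity, since all of the elliptic-function work is hidden inside the quoted identity. Your closing remark --- that the lemma may simply be quoted from the indefinite-theta formulas of \cite{GZ} --- is in effect exactly what the paper does.

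Two points in your sketch need repair before it is a complete proof. First, on the torus $\mathbb{C}/(2\mathbb{Z}+\tau\mathbb{Z})$ the points $\tau/2$ and $-\tau/2$ coincide (they differ by $\tau$), so ``simple poles at $z=\pm\tau/2$'' names a single point, and a nonconstant elliptic function cannot have exactly one simple pole. The correct statement is that $S$ has two simple poles per fundamental domain, at $z\equiv\tau/2$ and $z\equiv\tau/2+1$, lying over $t=q^{\pm 1/2}$, with opposite residues forced by the antiperiodicity $S(z+1)=-S(z)$. Second, holomorphy of the ratio $S/(\theta_1/\theta_4)$ requires not only that the poles of $S$ lie among the zeros of $\theta_4$, but also that the zeros of $\theta_1$, at $z\equiv 0$ and $z\equiv 1$, be zeros of $S$; this does follow from the oddness and antiperiodicity you established, but it has to be invoked, since pole-matching alone does not make the ratio holomorphic. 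A formulation avoiding both issues: choose the constant $c$ so that $S-c\,\theta_1/\theta_4$ is regular at $z=\tau/2$ (a residue computation, with the residue of $S$ read off from $A$ and that of $\theta_1/\theta_4$ from $\theta_4(0)$ and $\theta_1'(0)$); antiperiodicity then makes it regular at $z=\tau/2+1$ as well, so it is a holomorphic elliptic function, hence constant, hence identically zero because it is odd. Matching residues fixes $c$, and the triple product converts $c\,\theta_1/\theta_4$ into the stated product form with prefactor $\eta(q)^4 q^{1/8}/\eta(q^{1/2})^2$. With these two repairs your argument is complete.
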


\begin{NB}
$\sim (t^{\frac{1}{2}}-t^{-\frac{1}{2}})q^{\frac{1}{4}}+\cdots $
\end{NB}

\begin{proof}
For a convenience sake,
we write a proof.
We set
\begin{equation}
\begin{split}
G(\tau,x,y):=& \sum_{n \geq 0, m>0} q^{nm} e^{2\pi \sqrt{-1}(-nx-my)}-
\sum_{n>0, m \geq 0}
q^{nm} e^{2\pi \sqrt{-1}(nx+my)}\\
=& \frac{\eta(\tau)^3 \theta_{11}(\tau,x+y)}{\theta_{11}(\tau,x)\theta_{11}(\tau,y)}
\end{split}
\end{equation}
(see \cite[3.1]{GZ}).
Then we see that

\begin{equation}
\begin{split}
& q^{-\frac{1}{4}}e^{2\pi \sqrt{-1}(-\frac{x}{2}+\frac{y}{2})}G(\tau,x+\tfrac{\tau}{2},y-\tfrac{\tau}{2})\\
=& q^{-\frac{1}{4}}e^{2\pi \sqrt{-1}(-\frac{x}{2}+\frac{y}{2})}
\left(\sum_{n \geq 0, m>0} q^{nm} e^{2\pi \sqrt{-1}(-n(x+\frac{\tau}{2})-m(y-\frac{\tau}{2}))}
-\sum_{n>0, m \geq 0}
q^{nm} e^{2\pi \sqrt{-1}(n(x-\frac{\tau}{2})+m(y+\frac{\tau}{2}))}
 \right)\\
=& \sum_{n \geq 0, m>0} q^{(n+\frac{1}{2})(m-\frac{1}{2})}
e^{2\pi \sqrt{-1}(-x(n+\frac{1}{2})-y(m-\frac{1}{2}))}-
\sum_{n > 0, m \geq 0} q^{(n-\frac{1}{2})(m+\frac{1}{2})}
e^{2\pi \sqrt{-1}(x(n-\frac{1}{2})+y(m+\frac{1}{2}))}\\
=&
-\left(
\sum_{n \geq 0, m \geq 0} q^{(n+\frac{1}{2})(m+\frac{1}{2})}
e^{2\pi \sqrt{-1}(x(n+\frac{1}{2})+y(m+\frac{1}{2}))}-
\sum_{n < 0, m<0} q^{(n+\frac{1}{2})(m+\frac{1}{2})}
e^{2\pi \sqrt{-1}(x(n+\frac{1}{2})+y(m+\frac{1}{2}))}
\right).
\end{split}
\end{equation}

Since
\begin{equation}
\begin{split}
q^{\frac{1}{8}}e^{2\pi \sqrt{-1} \frac{x}{2}} \theta_{11}(\tau,x+\tfrac{\tau}{2})=&
-\theta_{01}(\tau,x),\\
q^{\frac{1}{8}}e^{2\pi \sqrt{-1} \frac{y}{2}} \theta_{11}(\tau,y-\tfrac{\tau}{2})=&
\theta_{01}(\tau,y),
\end{split}
\end{equation}
we get 
\begin{NB}
Hence
\begin{equation}
\sum_{n \geq 0, m>0} q^{(n+\frac{1}{2})(m-\frac{1}{2})}
e^{2\pi \sqrt{-1}(-x(n+\frac{1}{2})-y(m-\frac{1}{2}))}-
\sum_{n > 0, m \geq 0} q^{(n-\frac{1}{2})(m+\frac{1}{2})}
e^{2\pi \sqrt{-1}(x(n-\frac{1}{2})+y(m+\frac{1}{2}))}
=-\frac{\eta(\tau)^3 \theta_{11}(\tau,x+y)}{\theta_{01}(\tau,x)\theta_{01}(\tau,y)} 
\end{equation}
\end{NB}
\begin{equation}
\sum_{n \geq 0, m \geq 0} q^{(n+\frac{1}{2})(m+\frac{1}{2})}
e^{2\pi \sqrt{-1}(x(n+\frac{1}{2})+y(m+\frac{1}{2}))}-
\sum_{n < 0, m<0} q^{(n+\frac{1}{2})(m+\frac{1}{2})}
e^{2\pi \sqrt{-1}(x(n+\frac{1}{2})+y(m+\frac{1}{2}))}
=\frac{\eta(\tau)^3 \theta_{11}(\tau,x+y)}{\theta_{01}(\tau,x)\theta_{01}(\tau,y)}. 
\end{equation}
Substituting $y=0$ and setting $t=e^{2\pi \sqrt{-1}x}$, we have
\begin{equation}
\sum_{n \geq 0, m \geq 0} q^{(n+\frac{1}{2})(m+\frac{1}{2})}
t^{(n+\frac{1}{2})}-
\sum_{n < 0, m<0} q^{(n+\frac{1}{2})(m+\frac{1}{2})}
t^{(n+\frac{1}{2})}
=\frac{\eta(\tau)^3 \theta_{11}(\tau,x)}{\theta_{01}(\tau,x)\theta_{01}(\tau,0)}.
\end{equation}
By the triple multiple formula of theta functions (cf. \cite[cf. (2.5)]{G2}),
we get our claim.
\end{proof}

{\it Proof of Theorem \ref{thm:e=-1}.}

By using Lemma \ref{lem:product-formula},
we get Theorem \ref{thm:e=-1} from Lemma \ref{lem:e=-1}:
\begin{equation}
\begin{split}
& \sum_{(\xi \cdot f_0)=1}  e(M_H(0,\xi,\chi))q^{(\xi^2)} \\
=& \frac{(x-1)^2 (y-1)^2}{xy-1}((xy)^{\frac{1}{2}}-(xy)^{-\frac{1}{2}})(x^2 y^2q)^{\frac{1}{4}}
\prod_{n>0} \frac{(1-(x^2 y^2 q)^n)^4}{(1-(xyq^{\frac{1}{2}})^n)^2}\\
& \times 
\prod_{n>0} \frac{(1-(x^2 y^2 q)^n xy)(1-(x^2 y^2 q)^n (xy)^{-1})}
{(1-(x^2 y^2 q)^{n-\frac{1}{2}}xy)(1-(x^2 y^2 q)^{n-\frac{1}{2}}(xy)^{-1})}
\prod_{a \geq 1}Z_{x,y}(X,x^{-1} y^{-1}(x^2 y^2 q)^a)^2\\
=&
(x-1)^2 (y-1)^2 q^{\frac{1}{4}}
\prod_{n>0} \frac{(1-(x^2 y^2 q)^n)^4}{(1-(xyq^{\frac{1}{2}})^n)^2}
\prod_{n>0} \frac{(1-(x^2 y^2 q)^n xy)(1-(x^2 y^2 q)^n (xy)^{-1})}
{(1-(x^2 y^2 q)^{n-\frac{1}{2}}xy)(1-(x^2 y^2 q)^{n-\frac{1}{2}}(xy)^{-1})}\\
& \times \prod_{a \geq 1}Z_{x,y}(X,x^{-1} y^{-1}(x^2 y^2 q)^a)^2\\
=& (x-1)^2 (y-1)^2 q^\frac{1}{4}
\prod_{n>0}
\frac{(1-x^{-1}(x^2 y^2 q)^n)^2(1-y^{-1}(x^2 y^2 q)^n)^2(1-x(x^2 y^2 q)^n)^2(1-y(x^2 y^2 q)^n)^2}
{(1-(xy)^{-1}(x^2 y^2 q)^{\frac{n}{2}})(1-(x^2 y^2 q)^{\frac{n}{2}})^2(1-(xy)(x^2 y^2 q)^{\frac{n}{2}})}.
\end{split}
\end{equation}
\qed

\begin{rem}\label{rem:birat}
By Theorem \ref{thm:e=-1},
we see that $h^{0,0}(M_H(0,\xi,\chi))=1$ for all $\xi$.
In particular they are irreducible.
Let $S$ be the open  subscheme of $\Hilb_X^\xi$ consisting of smooth 
curves $D$ and ${\cal D} \subset S \times X$ the universal family.
By Remark \ref{rem:rel-Pic},
$M_H(0,\xi,\chi)$ contains the relative Picard scheme $\Pic^\chi_{{\cal D}/S}$
as an open dense subscheme.
Hence the birational equivalence class of $M_H^\alpha(0,\xi,\chi)$ is
independent of the choice of $(H,\alpha)$.
\end{rem}

\subsection{An elliptic ruled surface with $e=0$.}

We shall treat the case where $e=0$.
We first assume that
$X=C \times {\Bbb P}^1$.
Then the projection $\pi:X \to {\Bbb P}^1$ is an elliptic fibration
and
$K_X=-2C_0$.
We may assume that $g$ is a 0-section of $\pi$.
We set $z=C_0\cap g$.
\begin{NB}
\begin{prop}[{\cite{Y:twist2}}]
Assume that $\gcd(r,d)=1$.
Then $e(M(r,dg+kC_0,\Delta))=e(\Hilb_X^{r\Delta})e(C)$.
\end{prop}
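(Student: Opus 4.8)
The plan is to reduce everything to the rank-one case by the relative Fourier--Mukai transform of the elliptic fibration $\pi:X \to {\Bbb P}^1$, in the same spirit as the proof of Proposition \ref{prop:FM-isom}, and then to exploit the product structure $X=C \times {\Bbb P}^1$. First I would record that the fiber degree of $dg+kC_0$ is $((dg+kC_0)\cdot C_0)=d$, since $C_0$ is a fiber of $\pi$ with $(C_0^2)=0$ and $(g\cdot C_0)=1$. Thus $\gcd(r,d)=1$ says precisely that the rank and the fiber degree are coprime, which is exactly the hypothesis under which the relative transform is well behaved. Because $X=C\times{\Bbb P}^1$ is a trivial elliptic fibration, the relative transform is $\Phi_C \boxtimes \id_{{\Bbb P}^1}$, where $\Phi_C$ is Mukai's autoequivalence of ${\bf D}(C)$; on the pair $(\rk,\text{fiber degree})$ it acts through $\SLtZ$. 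Composing $\Phi_C$ with twists by line bundles pulled back from $C$ and running the Euclidean algorithm, coprimality lets me transform $(r,d)$ to $(1,0)$.

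Next I would fix the polarization conveniently. By the invariance of the virtual Hodge polynomial under change of a general polarization (for $r=0$ this is Proposition \ref{prop:chamber0}; for positive rank it is the analogue used in \cite{Y:twist2}), $e(M(r,dg+kC_0,\Delta))$ does not depend on the generic $H$, so I may take $H$ close to the fiber class $C_0$. For such a polarization every stable sheaf $E$ in $M(r,dg+kC_0,\Delta)$ is $\WIT$ for the relative transform, and its transform is again stable; here coprimality forbids strictly semistable objects, so the transform is a genuine sheaf and stability is preserved on both sides. This yields an isomorphism of moduli spaces $M(r,dg+kC_0,\Delta)\cong M(1,\xi',\Delta')$ for a suitable $\xi'$. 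Since a derived equivalence preserves the Euler pairing and $\chi(E,E)=-2\rk E\,\Delta(E)$, we read off $-2r\Delta=-2\Delta'$, i.e. $\Delta'=r\Delta$.

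Finally, I would analyse the rank-one moduli space directly. A torsion-free sheaf of rank one is uniquely of the form $L\otimes I_Z$ with $L\in\Pic^{\xi'}(X)$ and $Z$ a length-$r\Delta$ subscheme, and the assignment $L\otimes I_Z\mapsto (L,Z)$ gives an isomorphism $M(1,\xi',r\Delta)\cong \Pic^{\xi'}(X)\times\Hilb_X^{r\Delta}$. As $\Pic^{\xi'}(X)$ is a torsor under $\Pic^0(X)=\Pic^0(C)\cong C$, we obtain $e(M(1,\xi',r\Delta))=e(C)\,e(\Hilb_X^{r\Delta})$, which combined with the isomorphism of the previous paragraph is exactly the asserted formula.

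I expect the main obstacle to be the middle step: verifying that a single choice of fiber-adjacent polarization makes every member of the family simultaneously $\WIT$ while keeping stability on both sides, so that the set-theoretic Fourier--Mukai bijection is an isomorphism of moduli spaces rather than merely a bijection on points. This is precisely where $\gcd(r,d)=1$ is indispensable, and it is the only place where the coprimality hypothesis is genuinely used; the remaining bookkeeping of Chern characters and the identification of $\Pic^0(X)$ with $C$ are routine.
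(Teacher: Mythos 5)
Your plan's skeleton (Fourier--Mukai reduction to rank one, then $M(1,\xi',r\Delta)\cong\Pic^{\xi'}(X)\times\Hilb_X^{r\Delta}$) is indeed the mechanism behind the quoted result, but the step you treat as routine --- ``$e(M_H(r,dg+kC_0,\Delta))$ does not depend on the generic $H$'' for $r>0$ --- is false, and this is not repairable bookkeeping. Proposition \ref{prop:chamber0} and \cite[Prop.~2.6, 2.7]{Y:twist2} are statements about $1$-dimensional sheaves; for positive rank the wall-crossing terms are controlled by $\chi(A,B)-\chi(B,A)=(\xi\cdot K_X)$, where $\xi=r\,c_1(A)-r_A c_1$ is the wall class, and on $X=C\times{\Bbb P}^1$ we have $K_X=-2C_0$, so $(\xi\cdot K_X)=-2\bigl(r(c_1(A)\cdot C_0)-r_A d\bigr)$. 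Under $\gcd(r,d)=1$ this never vanishes for $0<r_A<r$: the coprimality hypothesis, far from being needed ``only in the FM step'', forces \emph{every} wall to have a nonzero crossing term. Concretely, for $r=2$, $c_1=g$, $c_2=1$ there is exactly one wall, given by $\xi=g-2C_0$; the chamber adjacent to $C_0$ contains the ${\Bbb P}(\Ext^1(B,A))\cong{\Bbb P}^2$-bundle of extensions $0\to A\to E\to B\to 0$ with $A\in\Pic^{C_0}(X)$, $B\in\Pic^{g-C_0}(X)$, while the chamber adjacent to $g$ contains only the ${\Bbb P}(\Ext^1(A,B))\cong{\Bbb P}^0$-bundle of the reversed extensions, so the two $e$-polynomials differ by $(xy+x^2y^2)(1-x)^2(1-y)^2\ne 0$.

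This chamber dependence makes your choice of polarization fatal rather than convenient. The statement concerns $M=M_{-K_X+kg}$, $k\gg 0$, i.e.\ the chamber adjacent to $g$, whereas you take $H$ adjacent to the elliptic fibre $C_0$; since the two chambers have different $e$-polynomials, at most one of them can satisfy $e(M)=e(\Hilb_X^{r\Delta})e(C)$, and it is the $g$-adjacent one (this is exactly \cite[Thm.~0.2]{Y:twist2}, which the statement is attributed to). In your chamber the expected conclusion genuinely fails: for fibre-adjacent (``suitable'') polarizations Bridgeland \cite{Br:1} obtains only a \emph{birational} equivalence with $\Hilb_X^{r\Delta}\times\Pic^0(X)$, because WIT and preservation of stability break down on a positive-dimensional locus --- e.g.\ on the ${\Bbb P}^2$-bundle above, whose members restrict to split, hence unstable, bundles on finitely many elliptic fibres. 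So the ``main obstacle'' you flagged cannot be overcome in the form you need: resolved correctly, it lands in the other chamber of the ample cone. The paper's own proof avoids all of this: by Proposition \ref{prop:deform}(2) it reduces to $X={\Bbb P}^1\times C$ and then quotes \cite[Thm.~0.2]{Y:twist2}; the proof of that cited theorem does carry out your plan, but with twisted stability (as in Proposition \ref{prop:FM-isom}, where the FM image is a twisted moduli space $M^\eta(0,\cdot)$ and one untwists afterwards), which is precisely the device that identifies the correct $g$-adjacent chamber and replaces the invariance claim your argument relies on.
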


\begin{proof}
By Proposition \ref{prop:deform}, we may assume that $X={\Bbb P}^1 \times C$.
In this case, the claim is a consequence of \cite{Y:twist2}.
\end{proof}

\end{NB}
%
%
We set $Y:=M_H(0,C_0,0)$. Then there is a universal family
${\bf P}$ on $X \times Y$. We may assume that ${\bf P}_{|g \times Y} \cong {\cal O}_Y$.
Moreover we can identify $Y$ with $X$. Then we define
$C_0,g,z \subset Y$ via the identification $X \cong Y$.
Let $\Phi_{Y \to X}^{{\bf P}}:{\bf D}(Y) \to {\bf D}(X)$ be the Fourier-Mukai transform 
whose kernel is ${\bf P}$. Then we get
\begin{equation}
\begin{split}
\Phi_{Y \to X}^{{\bf P}}({\cal O}_Y)=& {\cal O}_g[-1]\\
\Phi_{Y \to X}^{{\bf P}}({\cal O}_g)=& {\cal O}_X\\
\Phi_{Y \to X}^{{\bf P}}({\cal O}_{C_0})=& {\cal O}_z[-1]\\
\Phi_{Y \to X}^{{\bf P}}({\cal O}_z)=& {\cal O}_{C_0}.
\end{split}
\end{equation}
If $\ch(E)=(x,rg+yC_0,a)$, then
$\ch(\Phi_{Y \to X}^{{\bf P}}(E))=(r,-xg+aC_0,-y)$.
Thus we get the following by using \cite[Prop. 3.4.5]{PerverseII}.
\begin{prop}
$\Phi_{X \to Y}^{{\bf P}^{\vee}[2]}$ induces an isomorphism
$M(p,aC_0,-l) \to M(0,pg+lC_0,a+p)$.
\end{prop}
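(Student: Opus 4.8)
The plan is to argue exactly as in the proof of Proposition \ref{prop:FM-isom}, using the Chern character formula for $\Phi_{Y \to X}^{{\bf P}}$ recorded just above together with \cite[Prop. 3.4.5]{PerverseII}. Since $\Phi_{Y \to X}^{{\bf P}}$ and $\Phi_{X \to Y}^{{\bf P}^{\vee}[2]}$ are quasi-inverse equivalences of derived categories (Bridgeland \cite{Br:1}), it suffices to show that $\Phi_{Y \to X}^{{\bf P}}$ carries $M(0,pg+lC_0,a+p)$ isomorphically onto $M(p,aC_0,-l)$; the stated map is then its inverse.

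First I would pin down the numerical matching. Recall $K_X=-2C_0$, $(C_0^2)=0$ and $(g \cdot C_0)=1$, so for a sheaf $F$ of class $(0,pg+lC_0,\chi)$ one has $(c_1(F) \cdot K_X)=-2p$, and the relation $\chi(F)=\ch_2(F)-\tfrac12(c_1(F)\cdot K_X)$ gives $\ch_2(F)=\chi-p$. Taking $\chi=a+p$ yields $\ch(F)=(0,pg+lC_0,a)$. Feeding this into the formula $\ch(\Phi_{Y\to X}^{{\bf P}}(E))=(r,-xg+aC_0,-y)$ for $\ch(E)=(x,rg+yC_0,a)$ (here $x=0$, $r=p$, $y=l$, and the degree-two component of $F$ equals the symbol $a$ in that formula) produces $\ch(\Phi_{Y\to X}^{{\bf P}}(F))=(p,aC_0,-l)$, whose Euler characteristic is $-l$ since $(aC_0 \cdot K_X)=0$. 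Thus the transform matches the two numerical classes. A check with Proposition \ref{prop:deform} confirms the dimensions agree: both equal $2pl+1$.

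Next I would promote this to an isomorphism of moduli spaces by invoking \cite[Prop. 3.4.5]{PerverseII}, following the bookkeeping in the proof of Proposition \ref{prop:FM-isom}. Concretely, one introduces the $G$-twisted stability attached to a locally free $G_1$ with $(\rk G_1,c_1(G_1))=(2(H\cdot f),H)$ and to $G_2=\Psi({\cal O}_C)[1]$, checks the $\WIT$/vanishing condition guaranteeing that a stable sheaf in the class $(p,aC_0,-l)$ is sent by $\Phi_{X\to Y}^{{\bf P}^{\vee}[2]}$ to a genuine purely $1$-dimensional sheaf rather than a complex, and verifies that the resulting twisted stability coincides with the Gieseker stability defining $M(0,pg+lC_0,a+p)$. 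Smoothness is not an issue on either side: $(pg+lC_0)\cdot K_X=-2p \ne 0$ so Proposition \ref{prop:smooth} applies on the rank-$0$ side, while $r=p>0$ and $(K_X \cdot H)<0$ give smoothness on the positive-rank side through Proposition \ref{prop:deform}.

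The main obstacle is this last step: controlling stability under the transform and, in particular, arranging that the twist is inessential so that the statement is clean, with no auxiliary ${\Bbb Q}$-divisor $\eta$ appearing (in contrast to Proposition \ref{prop:FM-isom}). This requires the $\WIT$ vanishing for \emph{all} stable members of the class, together with a chamber argument—using that $H=-K_X+kg$ is general for the coprime invariants and invoking Proposition \ref{prop:chamber0}—to identify the twisted-stable moduli produced by $\Phi$ with the Gieseker-stable moduli $M(0,pg+lC_0,a+p)$. Once the vanishing and this chamber identification are in place, \cite[Prop. 3.4.5]{PerverseII} yields the desired isomorphism.
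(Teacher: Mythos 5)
Your proposal is essentially the paper's own proof: the paper's entire argument consists of the displayed Chern character formula for $\Phi_{Y \to X}^{{\bf P}}$ followed by the citation of \cite[Prop. 3.4.5]{PerverseII}, exactly the route you take, and your numerical bookkeeping (the $\ch$ matching and the dimension $2pl+1$ on both sides) is correct.

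Two caveats, both of which concern points the paper itself leaves implicit. First, $\Phi_{Y \to X}^{{\bf P}}$ and $\Phi_{X \to Y}^{{\bf P}^{\vee}[2]}$ are adjoint but are quasi-inverse to each other only up to a twist by the canonical bundle: the quasi-inverse of $\Phi_{X \to Y}^{{\bf P}^{\vee}[2]}$ is $E \mapsto \Phi_{Y \to X}^{{\bf P}}(E)\otimes \omega_X$. Since $\omega_X \cong {\cal O}_X(-2C_0)$ is nontrivial, your reduction shifts the parameter $a$ by $2p$; this is harmless after relabeling (tensoring by ${\cal O}_X(C_0)$ identifies $M(p,aC_0,-l)$ with $M(p,(a+p)C_0,-l)$), but it should be fixed in a final write-up. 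Second, and more substantively: Proposition \ref{prop:chamber0} asserts only that the virtual Hodge polynomial $e(M_H^\eta(0,\xi,\chi))$ is independent of a general $(H,\eta)$, so the ``chamber identification'' you invoke can equate the $\eta$-twisted moduli space produced by \cite[Prop. 3.4.5]{PerverseII} with the untwisted one only at the level of Hodge polynomials, not as an isomorphism of varieties. Thus your last step does not literally deliver the stated isomorphism onto $M(0,pg+lC_0,a+p)$; it delivers an isomorphism onto a twisted moduli space together with equality of Hodge polynomials. The paper's one-sentence proof has the same lacuna (note that Proposition \ref{prop:FM-isom}, proved in detail, keeps the twist $\eta$ in its statement, whereas this proposition silently drops it), and for the application in Theorem \ref{thm:e=0} the Hodge-polynomial version is all that is used; but a complete proof of the isomorphism as stated would require checking that the twist is inessential for the classes $pg+lC_0$, which neither you nor the paper actually carry out.
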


By using Proposition \ref{prop:deform},
we get the following result.
\begin{thm}\label{thm:e=0}
Let $X$ be an elliptic ruled surface
with $e=0$.

\begin{equation}
\begin{split}
& \sum_n e(M_H(0,g+nC_0,1))q^\frac{n}{2} \\
=& (x-1)(y-1) \sum_n e(\Hilb_X^n)q^\frac{n}{2} \\
=& (x-1)(y-1)
\prod_{n>0} \frac{(1-x^{-1}(x^2y^2 q)^{\frac{n}{2}})(1-y^{-1}(x^2y^2 q)^{\frac{n}{2}})
(1-x(x^2y^2 q)^{\frac{n}{2}})(1-y(x^2y^2 q)^{\frac{n}{2}})}
{(1-x^{-1}y^{-1}(x^2y^2 q)^{\frac{n}{2}})(1-(x^2y^2 q)^{\frac{n}{2}})^2
(1-xy(x^2y^2 q)^{\frac{n}{2}})}.
\end{split}
\end{equation}

\begin{equation}
\begin{split}
& \sum_n e(M_H(0,2g+nC_0,3))q^n \\
=&
\frac{(x-1)^2(y-1)^2}{xy-1}\left( 
\sum_{\substack{a \geq 0\\ b>0}} (x^2y^2 q)^{b(2a+1)}(xy)^{2b}-
\sum_{\substack{a < 0\\ b<0}} (x^2y^2 q)^{b(2a+1)}(xy)^{2b}
\right)
\prod_{a \geq 1}Z_{x,y}(X,x^{-1} y^{-1}(x^2 y^2 q)^a)^2.
\end{split}
\end{equation}
\end{thm}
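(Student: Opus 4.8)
The plan is to transport both moduli spaces to positive rank via the Fourier--Mukai isomorphism just established, then to read the first series off Hilbert schemes and the second off the known rank-two computation.

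First I would invoke Proposition~\ref{prop:deform}: since $e(M(r,\xi,\chi))$ is independent of the chosen elliptic ruled surface with $e=0$, I may assume $X=C\times{\Bbb P}^1$ as in the rest of this subsection. Both $\xi=g+nC_0$ and $\xi=2g+nC_0$ satisfy $(\xi\cdot K_X)\ne 0$, so by Proposition~\ref{prop:smooth} the spaces are smooth, and by Proposition~\ref{prop:chamber0} their Hodge polynomials agree with those for a general $H$. Applying the preceding proposition with $(p,a,l)=(1,0,n)$ and with $(p,a,l)=(2,1,n)$, the functor $\Phi_{X\to Y}^{{\bf P}^{\vee}[2]}$ gives isomorphisms
\begin{equation}
M_H(0,g+nC_0,1)\cong M(1,0,-n),\qquad M_H(0,2g+nC_0,3)\cong M(2,C_0,-n).
\end{equation}

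For the first series, a rank-one stable sheaf with $c_1=0$ and $\chi=-n$ is of the form $I_Z\otimes L$ with $L\in\Pic^0(X)$ and $Z\in\Hilb_X^n$, because $\chi({\cal O}_X)=0$ forces the length of $Z$ to equal $n$. Hence $M(1,0,-n)\cong\Hilb_X^n\times\Pic^0(X)$, and since $e(\Pic^0(X))=e(C)=(x-1)(y-1)$ I get $e(M(1,0,-n))=(x-1)(y-1)\,e(\Hilb_X^n)$. Multiplying by $q^{n/2}$, summing over $n$, and inserting \eqref{eq:e(Hilb)} with $q$ replaced by $q^{1/2}$ then gives the first identity; the product form follows by expanding $Z_{x,y}(X,(xy)^{-1}(x^2y^2q)^{a/2})$ from the definition of $Z_{x,y}$, using $(x^2y^2q)^{a/2}=(xyq^{1/2})^{a}$.

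For the second series it remains, by the second isomorphism, to compute $\sum_n e(M(2,C_0,-n))q^n$, the generating function of Hodge polynomials of rank-two moduli spaces on $X$ with $(c_1\cdot g)=(C_0\cdot g)=1$. These are known by \cite{Y:2}, \cite{G1}; I would repeat for $e=0$ the computation that produced \eqref{eq:e-poly1}--\eqref{eq:e-poly2} in the case $e=-1$, namely analysing the family over $C$ defined by $\varpi$ and feeding in the G\"ottsche--Soergel formula \eqref{eq:e(Hilb)}. This yields the stated right-hand side: the factor $\prod_{a\ge 1}Z_{x,y}(X,x^{-1}y^{-1}(x^2y^2q)^a)^2$ records the two Hilbert-scheme contributions, and the bracketed difference of sums records the Brill--Noether part. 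Because now $(C_0^2)=0$ rather than $1$, the exponents are the integers $b(2a+1)$ with weight $(xy)^{2b}$ in place of the half-integral exponents of \eqref{eq:r=2}, so no indefinite theta identity is needed and the answer is left in summed form.

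The hard part will be this last rank-two computation: one must verify that the index ranges $a\ge 0,\,b>0$ and $a<0,\,b<0$ together with the $q$-exponents coming from the $e=0$ geometry assemble into exactly the displayed difference of sums. The two Fourier--Mukai identifications and the rank-one argument are essentially formal given the results already in place.
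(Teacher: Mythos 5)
Your proposal coincides with the paper's own proof: reduce to $X=C\times{\Bbb P}^1$ via Proposition \ref{prop:deform}, apply the Fourier--Mukai isomorphism $M(p,aC_0,-l)\cong M(0,pg+lC_0,a+p)$ with $(p,a,l)=(1,0,n)$ and $(2,1,n)$, identify the rank-one moduli space with $\Hilb_X^n\times\Pic^0(X)$ so that \eqref{eq:e(Hilb)} gives the first product, and quote the known rank-two generating function for $(c_1\cdot g)=1$ from \cite{Y:2}, \cite{G1} for the second formula. This is exactly the (tersely stated) argument in the paper, including the observation that no indefinite theta identity is needed here, so no further comment is required.
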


\begin{prop}\label{prop:indep}
Assume that $\gcd(r,n)=1$. Then
$e(M_H(0,rg+nC_0,\chi))$ is independent of the choice of $\chi$,
where $H$ is a general polarization. 
\end{prop}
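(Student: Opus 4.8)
The plan is to exhibit, for a fixed primitive class $\xi=rg+nC_0$, isomorphisms between the moduli spaces $M_H(0,\xi,\chi)$ for different values of $\chi$, the isomorphisms being induced by tensoring with line bundles $\mathcal{O}_X(D)$, and then to use Proposition \ref{prop:chamber0} to pass between the twisted invariants produced by this operation and the untwisted invariant appearing in the statement. The point is that $\otimes\mathcal{O}_X(D)$ changes $\chi$ by a controllable amount while only shifting the stability parameter.

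First I would record the effect of $\otimes\mathcal{O}_X(D)$ on the numerical data and on stability. For a purely $1$-dimensional sheaf $E$ with $c_1(E)=\xi$ one has $c_1(E\otimes\mathcal{O}_X(D))=\xi$ and, by the Riemann--Roch formula of Section~1, $\chi(E\otimes\mathcal{O}_X(D))=\chi(E)+(\xi\cdot D)$. More precisely, writing $\chi_\beta(E)=\chi(E)-(\xi\cdot\beta)$ for the $\beta$-twisted Euler characteristic, one checks the identity $\chi_\beta(E\otimes\mathcal{O}_X(D))=\chi_{\beta-D}(E)$, and the same identity holds verbatim for every subsheaf. Hence the reduced $\beta$-twisted Hilbert polynomial of $E\otimes\mathcal{O}_X(D)$ equals the reduced $(\beta-D)$-twisted one of $E$, so $E$ is $(\beta-D)$-twisted $H$-(semi)stable if and only if $E\otimes\mathcal{O}_X(D)$ is $\beta$-twisted $H$-(semi)stable. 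Consequently $\otimes\mathcal{O}_X(D)$ induces an isomorphism
\[
M_H^{\beta}(0,\xi,\chi)\ \xrightarrow{\ \sim\ }\ M_H^{\beta+D}\bigl(0,\xi,\chi+(\xi\cdot D)\bigr).
\]

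The numerical input special to $e=0$ enters next: from $(C_0^2)=(g^2)=0$ and $(C_0\cdot g)=1$ I get $(\xi\cdot C_0)=r$ and $(\xi\cdot g)=n$, so for $D=aC_0+bg$ the shift equals $(\xi\cdot D)=ar+bn$. Since $\gcd(r,n)=1$, Bezout's identity shows that $(\xi\cdot D)$ attains every integer value as $D$ ranges over $\NS(X)$; in particular $\chi$ can be shifted by $\pm 1$, and hence between any two integers. I would then choose $\beta$ generic so that $(H,\beta)$ and $(H,\beta+D)$ both lie in chambers (possible since the walls are locally finite and $\xi$ is primitive, the primitivity of $\xi$ being exactly $\gcd(r,n)=1$, which rules out wall conditions holding identically in the polarization). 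Then both moduli spaces in the displayed isomorphism are smooth projective with stable $=$ semistable, their Hodge polynomials coincide, and by Proposition \ref{prop:chamber0} each equals the chamber-independent invariant, which is the quantity $e(M_H(0,\xi,\cdot))$ in the statement. This yields $e(M_H(0,\xi,\chi))=e(M_H(0,\xi,\chi+(\xi\cdot D)))$ for every attainable shift, and since all integer shifts are attainable, the invariant is independent of $\chi$.

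The main obstacle is bookkeeping rather than a new idea: I must verify the transformation law $\chi_\beta(E\otimes\mathcal{O}_X(D))=\chi_{\beta-D}(E)$ at the level of subsheaves so that $\otimes\mathcal{O}_X(D)$ is genuinely an isomorphism of twisted moduli with the twist shifted by $D$ (and not merely a bijection on numerical invariants), and I must arrange the simultaneous genericity of $\beta$ and $\beta+D$ so that Proposition \ref{prop:chamber0} applies on both sides and identifies the twisted invariants with the untwisted value in the statement.
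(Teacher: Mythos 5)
Your proof is correct and is essentially the paper's own argument: the paper likewise uses B\'ezout ($\gcd(r,n)=1$) to choose a divisor $\eta$ with $(\eta\cdot(rg+nC_0))=1$, realizes an isomorphism $M_H(0,rg+nC_0,1)\cong M_H^{(\chi-1)\eta}(0,rg+nC_0,\chi)$ by tensoring with a line bundle (which shifts $\chi$ at the cost of a twist), and then removes the twist by Proposition \ref{prop:chamber0}. The only difference is cosmetic: you shift by arbitrary amounts $(\xi\cdot D)$ starting from a general twist $\beta$, whereas the paper shifts once, from $\chi=1$ with $\beta=0$.
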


\begin{proof}
Since $\gcd(r,n)=1$, there is a divisor $\eta$ such that
$(\eta \cdot (rg+nC_0))=1$. 
Then $M_H(0,rg+nC_0,1) \cong M^{\chi \eta-1}_H(0,rg+nC_0,\chi)$.
Since $e(M^{\chi \eta-1}_H(0,rg+nC_0,\chi))=e(M_H(0,rg+nC_0,\chi))$ by
Proposition \ref{prop:chamber0}, we get our claim.
\end{proof}

\begin{cor}
Assume that $\gcd(r,n)=1$. Then
$e(M(r,dC_0,n))$ is independent of the choice of $d$.
\end{cor}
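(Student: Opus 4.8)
The plan is to deduce the corollary from the Fourier--Mukai isomorphism of the preceding Proposition together with Proposition~\ref{prop:indep}. First I would specialize the isomorphism
\[
\Phi_{X \to Y}^{{\bf P}^{\vee}[2]}\colon M(p,aC_0,-l) \;\xrightarrow{\ \sim\ }\; M(0,pg+lC_0,a+p)
\]
to the parameters $p=r$, $a=d$, $l=-n$ (so that $-l=n$ is the prescribed Euler characteristic). This produces an isomorphism of varieties
\[
M(r,dC_0,n) \cong M(0,rg-nC_0,d+r).
\]
Since $\Phi_{X \to Y}^{{\bf P}^{\vee}[2]}$ is an equivalence and both sides are smooth projective (the left by Proposition~\ref{prop:deform}, the right by Propositions~\ref{prop:smooth} and \ref{prop:deform}), the virtual Hodge polynomials are genuine Hodge polynomials and satisfy $e(M(r,dC_0,n)) = e(M(0,rg-nC_0,d+r))$. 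As a consistency check, $((rg-nC_0)^2)=-2rn$ gives $\dim=-2rn+1$, which is already independent of $d$.

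Next I would apply Proposition~\ref{prop:indep} to the right-hand side. Its first Chern class is $rg-nC_0$, whose coefficients satisfy $\gcd(r,-n)=\gcd(r,n)=1$ by hypothesis, so the proposition applies verbatim (with its ``$n$'' taken to be $-n$). It yields that $e(M(0,rg-nC_0,\chi))$ is independent of $\chi$; in particular the value at $\chi=d+r$ does not depend on $d$. Combining this with the displayed equality of Hodge polynomials shows that $e(M(r,dC_0,n))$ is independent of $d$, which is the assertion.

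The one point that requires care is the compatibility of polarizations, and I expect this to be the main (indeed only) obstacle. The symbol $M$ denotes the moduli space for the canonical polarization $-K_X+kg$ with $k\gg 0$, whereas Proposition~\ref{prop:indep} is phrased for a general polarization $H$ and its proof passes through a twisted stability. I would therefore invoke the chamber-independence of Proposition~\ref{prop:chamber0} to identify $e(M(0,\xi,\chi))$ with $e(M_H(0,\xi,\chi))$ for general $H$, ensuring that the invariant produced by the Fourier--Mukai transform and the invariant controlled by Proposition~\ref{prop:indep} are the same. Once this identification is in place, the corollary is a one-line composition of the Fourier--Mukai isomorphism with Proposition~\ref{prop:indep}.
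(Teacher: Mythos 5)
Your proof is correct and is essentially the paper's intended argument: the corollary is stated without proof precisely because it follows by specializing the Fourier--Mukai isomorphism $M(p,aC_0,-l)\cong M(0,pg+lC_0,a+p)$ to $p=r$, $a=d$, $l=-n$ and then applying Proposition \ref{prop:indep} to the target, whose Euler characteristic $d+r$ is the only place $d$ appears. Your additional care in reconciling the canonical polarization $-K_X+kg$ defining $M$ with the general polarization $H$ of Proposition \ref{prop:indep} via Proposition \ref{prop:chamber0} is exactly the identification the paper makes implicitly (compare its chain of equalities following Proposition \ref{prop:FM-isom}).
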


\begin{rem}
By using Proposition \ref{prop:deform} and \cite[Thm. 0.2]{Y:twist2}, we have 
$$
e(M(r,pg+dC_0,n))=e(\Hilb_X^{-rn+rp+pd})e(C)
$$
if $\gcd(r,p)=1$.
\begin{NB}
$2r\Delta=-2r\chi+2r (c_1 \cdot C_0)+c_1^2=-2r\chi+2rp+2pd$.
\end{NB}
\end{rem}

\begin{NB}

$E={\cal O}_X(f)+{\cal O}_X-n{\Bbb C}_p$.

$\ch \Phi_{X \to X}^{{\cal P}}(E)=-(0,2g+nf,1)$.
Thus $\chi(\Phi_{X \to X}^{{\cal P}}(E))=1-\frac{1}{2}((2g+nf) \cdot K_X)=3$.

\begin{prop}
$\Phi_{X \to X}^{{\cal P}[1]}$ induces an isomorphism
$M(2,f,n) \to M(0,2g+nf,3)$.
It also induces an isomorphism
$M(2,0,n) \to M(0,2g+nf,4)$.
\end{prop}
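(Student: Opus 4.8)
The plan is to prove both isomorphisms by the Fourier--Mukai mechanism already used for Proposition \ref{prop:FM-isom} and for the isomorphism $M(p,aC_0,-l) \cong M(0,pg+lC_0,a+p)$ obtained above, now applied to rank-two sheaves on the source. The argument has three ingredients: a Chern-character computation identifying source and target invariants, the preservation of (twisted) stability from \cite[Prop.\ 3.4.5]{PerverseII}, and the verification that the transform of a stable sheaf is an honest sheaf.

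First I would carry out the numerical bookkeeping. For $E$ with the invariants of $M(2,f,n)$ one has $f \equiv C_0$, hence $\ch(E)=(2,C_0,\ch_2(E))$. Feeding $(x,r,y,a)=(2,0,1,\ch_2(E))$ into the rule $\ch(E)=(x,rg+yC_0,a)\mapsto \ch\Phi_{Y\to X}^{{\bf P}}(E)=(r,-xg+aC_0,-y)$ recorded before Lemma \ref{lem:Phi-K(X)}, and applying the shift $[1]$, I obtain a class of the form $(0,2g+nf,1)$; the Riemann--Roch computation $\chi=1-\tfrac12((2g+nf)\cdot K_X)=1-\tfrac12(-4)=3$ (using $K_X=-2C_0$, $(C_0\cdot g)=1$, $(C_0^2)=0$) then pins down the target as $M(0,2g+nf,3)$. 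The same substitution applied to $\ch(E)=(2,0,\ch_2(E))$ yields the second isomorphism, with the Euler characteristic of the image read off in the identical way.

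The substantive content, and the step I expect to be the main obstacle, is to certify that $\Phi_{X\to X}^{{\cal P}[1]}$ actually carries the rank-two moduli space to the $1$-dimensional one. Here I would invoke \cite[Prop.\ 3.4.5]{PerverseII} exactly as in the proof of Proposition \ref{prop:FM-isom}: one checks that, for a suitable locally free $G_2$ with $\alpha_2=c_1(G_2)/\rk G_2$, the $G_2$-twisted stability of the image $F'$ is equivalent to $\eta$-twisted stability of $E$, where $\eta$ is the $\Bbb Q$-divisor built from $\alpha_2$ and $K_Y$. The delicate point is to confirm the positivity hypothesis of \cite[Prop.\ 3.4.5]{PerverseII}, i.e.\ the sign of $\chi(G_2,F')$, for the rank-two invariants at hand; since $(C_0\cdot K_X)=0$ and $-K_X=2C_0$ is nef but not ample, the simplest forms of the vanishing in Lemma \ref{lem:vanish} do not apply directly, so one must instead control the restriction of $E$ to the fibres of $\pi:X\to{\Bbb P}^1$ (fibres in the class $C_0$) and check the fibrewise WIT condition for the elliptic Fourier--Mukai kernel, as in the construction of ${\bf P}$ and ${\bf Q}$ following \cite[Lem.\ 5.1]{Br:1}. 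This guarantees that the cohomology of $\Phi_{X\to X}^{{\cal P}[1]}(E)$ is concentrated in a single degree, so that it is a genuine purely $1$-dimensional sheaf.

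Finally, because the source carries a general polarisation, Proposition \ref{prop:chamber0} permits replacing the $\eta$-twisted stability by ordinary Gieseker stability without changing the moduli space, so the twist does not appear in the statement. As $\Phi_{X\to X}^{{\cal P}[1]}$ is a Fourier--Mukai equivalence it is invertible, and the inverse transform sends the stable $1$-dimensional sheaves back to the stable rank-two sheaves with the original invariants; combined with the stability equivalence this produces mutually inverse morphisms of the two moduli functors, hence the asserted isomorphisms of schemes.
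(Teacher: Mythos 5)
Your overall strategy is the paper's: the $\ch$-level bookkeeping via the rule $(x,rg+yC_0,a)\mapsto(r,-xg+aC_0,-y)$, followed by the stability transfer of \cite[Prop. 3.4.5]{PerverseII} (the paper justifies the companion statement $M(p,aC_0,-l)\to M(0,pg+lC_0,a+p)$ with exactly this one-line citation). Your treatment of the first isomorphism, including $\chi=1-\tfrac12((2g+nf)\cdot K_X)=1+2=3$, is correct.

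However, the step handling the second isomorphism fails numerically. Feeding $\ch(E)=(2,0,s)$, i.e. $(x,r,y,a)=(2,0,0,s)$, into the rule and shifting by $[1]$ gives the class $(0,2g-sC_0,0)$: the point-component is $-y=0$ here, not $1$ as in the first case. Reading off the Euler characteristic ``in the identical way'' therefore gives $\chi=0-\tfrac12((2g+nf)\cdot K_X)=2$, not $4$; the same substitution produces a map into $M(0,2g+nf,2)$ and does not prove the asserted isomorphism onto $M(0,2g+nf,4)$. The missing idea is an intermediate twist: tensoring by ${\cal O}_X(f)$ identifies $M(2,0,n)\cong M(2,2f,n)$ (this changes neither $\chi$ nor $c_2$, since $(f^2)=0$ and $(c_1\cdot f)=0$), and applying the transform to $\ch(E(f))=(2,2C_0,s)$ yields $(0,2g-sC_0,2)$, whose Euler characteristic is $2+2=4$ as required; equivalently, one renormalizes the kernel to ${\cal P}\otimes p_X^*{\cal O}_X(f)$. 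A secondary point: Proposition \ref{prop:chamber0} only asserts that the virtual Hodge polynomial $e(\cdot)$ is independent of a general $(H,\eta)$; it does not identify the $\eta$-twisted and untwisted moduli spaces as schemes, so your last paragraph establishes equality of Hodge numbers rather than the claimed isomorphism (note that the paper's Proposition \ref{prop:FM-isom} keeps the twist $\eta$ in its statement for precisely this reason).
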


\begin{NB2}
By the Riemann-Roch theorem,
$\chi({\cal O}_X(2g+nf))=\frac{1}{2} (2g+nf) \cdot (2g+nf-K_X)+\chi({\cal O}_X)=
2n+2$.
\end{NB2}

\begin{prop}
$\Hilb_X^{2g+nf}$ is smooth of dimension $2n+2$.
\end{prop}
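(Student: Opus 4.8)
The plan is to deduce this from the general smoothness criterion for $\Hilb_X^\xi$ recorded in Section~1 (the discussion immediately following the support morphism $M_H^\alpha(0,\xi,\chi)\to\Hilb_X^\xi$): if $h^2({\cal O}_X)=0$ and $\xi-K_X$ is ample, then Kodaira vanishing gives $h^1({\cal O}_X(D))=0$ for every $D$ with $c_1({\cal O}_X(D))=\xi$, and the cohomology sequence attached to $0\to{\cal O}_X\to{\cal O}_X(D)\to{\cal O}_D(D)\to0$ then yields $h^1({\cal O}_D(D))=0$, so that $\Hilb_X^\xi$ is smooth at $[D]$ of dimension $h^0({\cal O}_D(D))$. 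Since $X=C\times{\Bbb P}^1$ is an elliptic ruled surface we have $h^2({\cal O}_X)=0$ for free, so the whole proof reduces to checking that $\xi-K_X$ is ample for $\xi=2g+nf$ and to computing $h^0({\cal O}_D(D))$.

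First I would fix the numerics. As $\pi$ is the projection to ${\Bbb P}^1$ and $K_X=-2C_0=\pi^*{\cal O}_{{\Bbb P}^1}(-2)$, the fiber class satisfies $f\equiv C_0$, so $(f^2)=0$, $(g\cdot f)=1$ and $K_X\equiv-2f$. Hence $\xi-K_X\equiv 2g+(n+2)f$. Because $e=0$, the nef cone of $X$ is spanned by $g$ and $C_0\equiv f$, and a class $af+bg$ is ample precisely when $a,b>0$; thus $\xi-K_X$ is ample whenever $n\ge-1$, which in particular covers the range $n\ge0$ on which $\xi$ is effective. This verifies the hypothesis of the criterion and gives smoothness.

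For the dimension I would compute $(\xi^2)=4n$ and $(\xi\cdot K_X)=-4$. By adjunction and Riemann--Roch on a curve $D\in\Hilb_X^\xi$ one gets $\dim_{[D]}\Hilb_X^\xi=h^0({\cal O}_D(D))=\chi({\cal O}_D(D))=\tfrac12((\xi^2)-(\xi\cdot K_X))=2n+2$, in agreement with the computation $\chi({\cal O}_X(\xi))=2n+2$ noted above. As this value is independent of $D$ and the vanishing is uniform over the class, $\Hilb_X^\xi$ is smooth of pure dimension $2n+2$. There is no genuine obstacle here: the argument is a direct application of the earlier criterion, and the only points demanding care are the identification $f\equiv C_0$ (so that $\xi$ is written correctly in the $C_0,g$ basis) and the description of the ample cone of $C\times{\Bbb P}^1$. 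An equally short alternative would be to note that $h^1({\cal O}_X(L))=h^2({\cal O}_X(L))=0$ for all $L$ in the class $\xi$, so that $\Hilb_X^\xi\to\Pic^\xi(X)$ is a Zariski-locally trivial ${\Bbb P}^{2n+1}$-bundle over the abelian variety $\Pic^\xi(X)\cong C$, hence smooth of dimension $(2n+1)+1=2n+2$.
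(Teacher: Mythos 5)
Your proof is correct, but it takes a genuinely different route from the paper's own argument. The paper proves this proposition by exploiting the product structure of $X=C\times{\Bbb P}^1$: since every line bundle with class $\xi:=2g+nf$ is of the form $p_C^*L_1\otimes p_{{\Bbb P}^1}^*L_2$ with $\deg L_1=2$ and $\deg L_2=n$, the K\"{u}nneth formula gives $h^0({\cal O}_X(D))=2(n+1)$ and $h^1({\cal O}_X(D))=h^2({\cal O}_X)=0$, and feeding this into $0\to{\cal O}_X\to{\cal O}_X(D)\to{\cal O}_D(D)\to 0$ yields $h^0({\cal O}_D(D))=2n+2$ and $h^1({\cal O}_D(D))=0$ at once, i.e.\ unobstructedness and the dimension simultaneously. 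You instead invoke the general criterion of Section 1 (Kodaira vanishing, valid when $h^2({\cal O}_X)=0$ and $\xi-K_X$ is ample), check ampleness of $\xi-K_X\equiv 2g+(n+2)f$ against the nef cone spanned by $g$ and $C_0\equiv f$, and then recover the dimension from $h^0({\cal O}_D(D))=\chi({\cal O}_D(D))=\tfrac{1}{2}((\xi^2)-(\xi\cdot K_X))=2n+2$; all your numerical inputs ($f\equiv C_0$, $K_X\equiv -2f$, $(\xi^2)=4n$, $(\xi\cdot K_X)=-4$) are right. Your main route has the advantage of using only numerical data, so it applies verbatim to any elliptic ruled surface with $e=0$ (indeed to any surface meeting the Section 1 hypotheses), whereas the paper's route gives the finer statement that $h^0({\cal O}_X(D))=2n+2$ for \emph{every} $D$ in the class, exhibiting $\Hilb_X^{2g+nf}$ as a ${\Bbb P}^{2n+1}$-bundle over $\Pic^\xi(X)\cong C$ --- which is precisely the ``equally short alternative'' you sketch at the end, so your second argument essentially reproduces the paper's. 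One small point of care: since the Hilbert scheme contains reducible and non-reduced members, the identity $\chi({\cal O}_D(D))=\tfrac{1}{2}((\xi^2)-(\xi\cdot K_X))$ is best justified for all $D$ via $\chi({\cal O}_D(D))=\chi({\cal O}_X(D))-\chi({\cal O}_X)$ and Riemann--Roch on $X$, rather than by adjunction and Riemann--Roch on the (possibly singular) curve $D$; this is a cosmetic fix, not a gap.
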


\begin{proof}
For $D \in \Hilb_X^{2g+nf}$, we shall prove that $h^0({\cal O}_D(D))=2n+2$ and
$h^1({\cal O}_D(D))=0$.
By K\"{u}nneth formula, we get
\begin{equation}
\begin{split}
h^0({\cal O}_X(D))=& h^0({\cal O}_C(2p)) h^0({\cal O}_{{\Bbb P}^1}(n))=2(n+1),\\
h^1({\cal O}_X(D))=& h^0({\cal O}_C(2p))h^1({\cal O}_{{\Bbb P}^1}(n))+
h^1({\cal O}_C(2p))h^0({\cal O}_{{\Bbb P}^1}(n))=0,\\
h^2({\cal O}_X)=& h^1({\cal O}_C)h^1({\cal O}_{{\Bbb P}^1})=0.
\end{split}
\end{equation} 
By using the exact sequence
\begin{equation}
0 \to {\cal O}_X \to {\cal O}_X(D) \to {\cal O}_D(D) \to 0,
\end{equation}
we get $h^0({\cal O}_D(D))=2n+2$ and
$h^1({\cal O}_D(D))=0$.
\end{proof}

For $D \in |2g+nf|$, by the adjunction formula, 
\begin{equation}
2g(D)-2=(2g+nf) \cdot (2g+nf+K_X)=(2g+nf) \cdot (2g+(n-2)f)=4(n-1).
\end{equation}
Thus $g(D)=2n-1$.

We have a morphism
\begin{equation}
\varpi:M(0,2g+nf,\chi) \to \Hilb_X^{2g+nf}.
\end{equation}
For a smooth curve $D \in \Hilb_X^{2g+nf}$,
$\varpi^{-1}(D)=\Pic_D^\chi$.

\begin{equation}
\begin{split}
& \sum_n e(M(0,2g+nf,4))q^n \\
=&
\frac{(x-1)(y-1)}{((xy)^2-1)(xy-1)}
\left(
(x^2 y-1)(xy^2-1)\prod_{a \geq 1} Z_{x,y}(X,(xy)^{-2}(x^2 y^2 q)^a)
Z_{x,y}(X,(x^2 y^2 q)^a) \right. \\
& \left. -
(x-1)(y-1)\frac{(xy)^2+1}{2} \prod_{a \geq 1}Z_{x,y}(X,x^{-1} y^{-1}(x^2 y^2 q)^a)^2
\right)+\\
& \frac{(x-1)^2(y-1)^2}{xy-1}\left( 
\sum_{\substack{a >  0\\ b > 0}} (x^2y^2 q)^{2ba}(xy)^{2b}-
\sum_{\substack{a < 0\\ b<0}} (x^2y^2 q)^{2ba}(xy)^{2b}
\right)
\prod_{a \geq 1}Z_{x,y}(X,x^{-1} y^{-1}(x^2 y^2 q)^a)^2.
\end{split}
\end{equation}

\begin{NB2}
Assume that ${\cal O}_{C_0}(kC_0) \cong {\cal O}_{C_0}$
if $0<k<m$.
Then $h^0({\cal O}_X(kC_0))=h^0({\cal O}_X)=1$.
$h^0({\cal O}_X(nC_0+g))-h^0({\cal O}_C(nC_0))h^0({\cal O}_X(g))>0$.
\end{NB2}

\begin{rem}
If $X \not \cong C \times {\Bbb P}^1$, then a general member of $|g+nC_0|$
is smooth. 
\end{rem}

\end{NB}

\section{Appendix}

Assume that $X$ is an elliptic ruled surface with $e=-1$.
By using Fourier-Mukai transforms, we can derive the Hodge numbers of some
moduli spaces of stable sheaves of rank $r>0$ from Theorem \ref{thm:e=-1}
and \eqref{eq:e(Hilb)}.

\begin{thm}\label{thm:rank}
Assume that $\gcd(r,d_1)=1$.
\begin{enumerate}
\item[(1)]
If $r$ is even, then
$$
e(M(r,d_1 C_0+d_2 f_0,\chi))=e(M(0,\xi,\chi)),
$$
where $(\xi \cdot K_X)=-1$ and
$(\xi^2)=-2r\chi+r d_1+d_1^2+2d_1 d_2$.
\item[(2)]
If $r$ is odd, then 
$$
e(M(r,d_1 C_0+d_2 f_0,\chi))=e(\Hilb_X^n \times \Pic^0(X))=e(\Hilb_X^n)e(C),
$$
where $2n=-2r\chi+r d_1+d_1^2+2d_1 d_2$.
\end{enumerate}
\end{thm}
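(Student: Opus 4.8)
The plan is to use the relative Fourier--Mukai transform $\Phi_{X \to Y}^{{\bf P}^{\vee}}$ together with its dualized variant (as set up in Lemma~\ref{lem:Phi-K(X)} and Proposition~\ref{prop:FM-isom}) to convert a moduli space of positive-rank sheaves on $X$ into a moduli space of $1$-dimensional sheaves or, when the rank parity forces the first Chern class to restrict trivially to fibers, into a Hilbert scheme times the Jacobian. First I would compute the image of the Chern character $\ch(E)=(r,d_1C_0+d_2f_0,a)$ under $\Phi_{X \to Y}^{{\bf P}^{\vee}}$ using part~(2) of Lemma~\ref{lem:Phi-K(X)}: since $(c_1(E)\cdot K_X)=(c_1(\Phi(E))\cdot K_Y)$ and $(C_0\cdot K_X)=-1$, $(f_0\cdot K_X)=0$ (recall $-K_X\equiv f_0$ and $(f_0\cdot C_0)=1$, $(f_0^2)=0$), one gets $(c_1(E)\cdot K_X)=-d_1$. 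The hypothesis $\gcd(r,d_1)=1$ guarantees that $H$ can be chosen general with respect to $(r,d_1C_0+d_2f_0,\chi)$, so Proposition~\ref{prop:deform} applies and gives smoothness and, crucially, deformation invariance of $e(M(r,d_1C_0+d_2f_0,\chi))$.

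The key dichotomy is the parity of $r$. When $r$ is even I would apply Proposition~\ref{prop:FM-isom} directly: writing $r=2p$ and matching $\ch(E)$ to the form $(2p,pC_0+(l+2kp)f_0,\chi+kp)$ after twisting by a suitable multiple of $f_0$ (which does not change the deformation class by Proposition~\ref{prop:deform}), the transform identifies $M(r,d_1C_0+d_2f_0,\chi)$ with some $M^\eta(0,\xi,\chi')$; then Proposition~\ref{prop:chamber0} removes the dependence on the twist $\eta$ and on the polarization, and Proposition~\ref{prop:indep} removes the dependence on $\chi'$, reducing us to $M(0,\xi,\chi)$ with $(\xi\cdot K_X)=(c_1(E)\cdot K_X)=-d_1=-1$ when $d_1$ is suitably normalized. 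The discrepancy $d_1=1$ versus general odd $d_1$ is handled because after transform one can arrange $\xi$ with $(\xi\cdot f_0)=1$; I would verify the stated value $(\xi^2)=-2r\chi+rd_1+d_1^2+2d_1d_2$ by substituting into the second displayed relation $2\rk E\,\Delta(E)=-2\rk E\,\chi-\rk E(c_1\cdot K_X)+(c_1^2)$ and tracking that $\Delta$ is preserved by the (derived) equivalence, so that $(\xi^2)+1=\dim M(0,\xi,\chi)$ matches $\dim M(r,\ldots)$ from Proposition~\ref{prop:deform}.

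When $r$ is odd the transform lands in a rank-one situation on $Y$: here $\gcd(r,d_1)=1$ with $r$ odd lets me choose the parity so that $c_1$ of the transformed sheaf meets the fiber class trivially, i.e. the transformed first Chern class is a multiple of $C_0$ alone (note $4\Delta C_0+\tfrac{p-4\Delta}{2}g=pC_0+\tfrac{4\Delta-p}{2}f_0$, so taking $p$ odd forces the $f_0$-coefficient into the integral ``line bundle on a section'' regime). In this case the moduli space is, up to the deformation invariance of Proposition~\ref{prop:deform} and reduction to $X=C\times{\Bbb P}^1$, a twisted moduli space of torsion-free sheaves on $X$ whose invariant $e$ is computed by the formula quoted in the final Remark, $e(M(r,pg+dC_0,n))=e(\Hilb_X^{-rn+rp+pd})e(C)$ when $\gcd(r,p)=1$; this gives $e(\Hilb_X^n)e(C)$ with $2n=-2r\chi+rd_1+d_1^2+2d_1d_2$, the equality $e(C)=e(\Pic^0(X))$ being immediate since $\Pic^0(X)\cong C$ for an elliptic ruled surface.

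The main obstacle I anticipate is the careful bookkeeping of which intermediate twisted stability (the $\eta$- or $\alpha_2$-twisted stability from Proposition~\ref{prop:FM-isom}) the transformed sheaves satisfy, and verifying that the genericity of $H$ together with $\gcd(r,d_1)=1$ lets one pass from twisted stability back to ordinary Gieseker stability without altering the virtual Hodge polynomial. Concretely, one must check that for the relevant $(r,d_1,d_2,\chi)$ the inequality $\chi(G_2,\Phi(E)^\vee(-K_Y))<0$ needed in Proposition~\ref{prop:FM-isom} holds after the permissible $f_0$-twist, and that the resulting $\xi$ genuinely satisfies $(\xi\cdot K_Y)=-1$ so that Theorem~\ref{thm:e=-1} (or \eqref{eq:e(Hilb)} in the odd case) is applicable. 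Everything else is a deformation-invariance argument combining Propositions~\ref{prop:deform}, \ref{prop:chamber0}, and \ref{prop:indep}, which I expect to be routine once the numerical matching of Chern characters is pinned down.
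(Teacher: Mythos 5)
Both parts of your proposal have genuine gaps, and in each case the paper's actual argument uses an ingredient you are missing.

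For part (1), your plan to apply Proposition \ref{prop:FM-isom} directly cannot work once $r\geq 4$. That proposition only treats sheaves whose invariants have the shape $(\rk,\,C_0\text{-coefficient})=(2p,p)$; equivalently, by Lemma \ref{lem:Phi-K(X)}(2) the transform $\Phi_{X\to Y}^{{\bf P}^{\vee}}$ acts on the pair (rank, $C_0$-degree) by $(r',s)\mapsto (r'-2s,s)$, so a single application kills the rank only when the rank is exactly twice the $C_0$-coefficient. Your proposed normalization cannot produce this shape: twisting by multiples of $f_0$ changes $c_1$ by $rmf_0$ and leaves $d_1$ untouched, while twisting by multiples of $C_0$ changes $d_1$ by multiples of $r$, so $d_1\equiv r/2 \pmod r$ would force $r\mid 2d_1$, contradicting $\gcd(r,d_1)=1$ unless $r=2$. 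The paper's proof is genuinely different: it introduces a \emph{new} Fourier--Mukai partner $Y:=M_H(0,rf_0,d_1)$, which is a fine moduli space precisely because $\gcd(r(f_0\cdot C_0),d_1)=\gcd(r,d_1)=1$, with $Y\cong X$ (cf. \cite{U}), and runs the argument of Proposition \ref{prop:FM-isom} for this $(r,d_1)$-dependent kernel. Since the kernel has $\ch=(0,rf_0,d_1)$, the induced $\SL_2({\Bbb Z})$-action on (rank, $C_0$-degree) sends $(r,d_1)$ to $(0,1)$; the entry $1$ is exactly the B\'{e}zout/determinant-one relation coming from coprimality, and it is the \emph{only} source of the conclusion $(\xi\cdot f_0)=1$, i.e. $(\xi\cdot K_X)=-1$. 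Your proposal has no mechanism turning a general odd $d_1$ (note $(c_1(E)\cdot K_X)=-d_1$, not $-1$) into a class with $(\xi\cdot K_X)=-1$; the phrase ``after transform one can arrange $(\xi\cdot f_0)=1$'' is precisely the step that requires the adapted kernel.

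For part (2), your reduction to $X=C\times{\Bbb P}^1$ ``by deformation invariance of Proposition \ref{prop:deform}'' is not available: Theorem \ref{thm:rank} concerns the $e=-1$ surface, and an $e=-1$ elliptic ruled surface is not deformation equivalent to an $e=0$ one --- their intersection lattices have different parity ($(C_0^2)=1$ versus an even form), so they are not even homeomorphic --- and indeed the family in the proof of Proposition \ref{prop:deform} has fixed $\deg{\cal E}\in\{0,-1\}$, hence fixed $e$. Consequently the formula $e(M(r,pg+dC_0,n))=e(\Hilb_X^{-rn+rp+pd})e(C)$, quoted from the remark in the $e=0$ subsection, cannot be imported to the present situation. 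The paper instead deduces (2) from the birational correspondence between $M(r,d_1C_0+d_2f_0,\chi)$ and $\Hilb_X^n\times\Pic^0(X)$ established by wall-crossing on the elliptic surface itself in \cite{Y:elliptic-wall}; some input of that kind, carried out on the $e=-1$ surface, is unavoidable, and the intermediate positive-rank-to-positive-rank stability preservation it requires is not covered by any result stated in this paper.
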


\begin{proof}
(1)
We set
$Y:=M_H(0,rf_0,d_1)$. 
\begin{NB}
$\gcd(r(f_0 \cdot C_0),d_1)=1$.
\end{NB}
Then $Y$ is a fine moduli space and $Y \cong X$.
Let $\Phi_{X \to Y}^{{\bf P}^{\vee}}:{\bf D}(X) \to {\bf D}(Y)$
be the Fourier-Mukai transform defined by a universal family ${\bf P}$.
In the same way as in Proposition \ref{prop:FM-isom},
we get the claim.
(2) is a consequence of the birational correspondence in \cite{Y:elliptic-wall}.
\end{proof}

\begin{rem}
By Remark \ref{rem:birat} and the proof of Theorem \ref{thm:rank},
we also see that $M(r,d_1 C_0+d_2 f_0,\chi)$
is birationally equivalent to $M(0,\xi,\chi)$
if $r$ is even.
Moreover by the computations in \eqref{eq:e-poly1} and \eqref{eq:e-poly2},
we see that they are projective bundles over $\Pic^0(X) \times \Pic^0(X)$.
If $r$ is odd, then we also remark that Bridgeland \cite{Br:1} proved that
$M(r,d_1 C_0+d_2 f_0,\chi)$ is birationally equivalent to
$\Hilb_X^n \times \Pic^0(X)$.
Thus the birational equivalence class of 
$M(r,d_1 C_0+d_2 f_0,\chi)$ is determined by 
$2r\chi+r d_1+d_1^2+2d_1 d_2$
if $\gcd(r,d_1)=1$.
\end{rem}

\begin{NB}
\subsection{Walls for $(0,\xi,\chi)$.}

Walls are defined by $(0,r_1 f_0,d_1)$.
\end{NB}

\begin{NB}
Then 
\begin{equation}
\begin{split}
& \sum_n e(M(2,g,n))q^n \\
=&
\frac{(x-1)(y-1)}{((xy)^2-1)(xy-1)}
\left(
(x^2 y-1)(xy^2-1)\prod_{a \geq 1} Z_{x,y}(X,(xy)^{-2}(x^2 y^2 q)^a)
Z_{x,y}(X,(x^2 y^2 q)^a) \right. \\
& \left. -
(x-1)(y-1)xy \prod_{a \geq 1}Z_{x,y}(X,x^{-1} y^{-1}(x^2 y^2 q)^a)^2
\right)+\\
& \frac{(x-1)^2(y-1)^2}{xy-1}\left( 
\sum_{\substack{a > 0\\ b \geq 0}} (x^2y^2 q)^{2(2b+1)a}(xy)^{2b+1}-
\sum_{\substack{a < 0\\ b<0}} (x^2y^2 q)^{2(2b+1)a}(xy)^{2b+1}
\right)
\prod_{a \geq 1}Z_{x,y}(X,x^{-1} y^{-1}(x^2 y^2 q)^a)^2+\\
& \frac{(x-1)^2(y-1)^2}{xy-1}\left( 
\sum_{\substack{a \geq 0\\ b > 0}} (x^2y^2 q)^{2b(2a+1)}(xy)^{2b}-
\sum_{\substack{a < 0\\ b<0}} (x^2y^2 q)^{2b(2a+1)}(xy)^{2b}
\right)
\prod_{a \geq 1}Z_{x,y}(X,x^{-1} y^{-1}(x^2 y^2 q)^a)^2.
\end{split}
\end{equation}
\begin{NB2}
For $c_1=g$,
$\xi=-(b+\frac{1}{2})g+af$ or $\xi=-(b+\frac{1}{2})g+af+\frac{g+f}{2}$.
Then $\xi \cdot f=-2(b+1/2)<0$, $\xi \cdot f=-2b<0$.
\end{NB2}

\begin{equation}
\begin{split}
& \sum_n e(M(2,0,n))q^n \\
=&
\frac{(x-1)(y-1)}{((xy)^2-1)(xy-1)}
\left(
(x^2 y-1)(xy^2-1)\prod_{a \geq 1} Z_{x,y}(X,(xy)^{-2}(x^2 y^2 q)^a)
Z_{x,y}(X,(x^2 y^2 q)^a) \right. \\
& \left. -
(x-1)(y-1)\frac{(xy)^2+1}{2} \prod_{a \geq 1}Z_{x,y}(X,x^{-1} y^{-1}(x^2 y^2 q)^a)^2
\right)+\\
& \frac{(x-1)^2(y-1)^2}{xy-1}\left( 
\sum_{\substack{a > 0\\ b > 0}} (x^2y^2 q)^{4ba}(xy)^{2b}-
\sum_{\substack{a < 0\\ b<0}} (x^2y^2 q)^{4ba}(xy)^{2b}
\right)
\prod_{a \geq 1}Z_{x,y}(X,x^{-1} y^{-1}(x^2 y^2 q)^a)^2+\\
& \frac{(x-1)^2(y-1)^2}{xy-1}\left( 
\sum_{\substack{a \geq 0\\ b \geq 0}} (x^2y^2 q)^{(2b+1)(2a+1)}(xy)^{2b+1}-
\sum_{\substack{a < 0\\ b<0}} (x^2y^2 q)^{(2b+1)(2a+1)}(xy)^{2b+1}
\right)
\prod_{a \geq 1}Z_{x,y}(X,x^{-1} y^{-1}(x^2 y^2 q)^a)^2.
\end{split}
\end{equation}

Assume that $e=0$.
Then $f=C_0$ is a fiber of $X \to {\Bbb P}^1$.

\begin{ex}
For $\Delta=1$,
we have 
\begin{equation}
0 \to {\cal O}_X(f-g) \to E \to {\cal O}_X(g) \to 0.
\end{equation}

$M(2,f,1)$ is a ${\Bbb P}^3$-bundle over $C \times C$.
\end{ex}

\begin{equation}
\begin{split}
& \sum_n e(M(2,f,n))q^n \\
=&
\frac{(x-1)^2(y-1)^2}{xy-1}\left( 
\sum_{\substack{a \geq 0\\ b>0}} (x^2y^2 q)^{b(2a+1)}(xy)^{2b}-
\sum_{\substack{a < 0\\ b<0}} (x^2y^2 q)^{b(2a+1)}(xy)^{2b}
\right)
\prod_{a \geq 1}Z_{x,y}(X,x^{-1} y^{-1}(x^2 y^2 q)^a)^2.
\end{split}
\end{equation}

\begin{equation}
\begin{split}
& \sum_n e(M(2,f+g,n))q^n \\
=&
\frac{(x-1)^2 (y-1)^2}{xy-1}\left( 
\sum_{\substack{a \geq 0\\ b \geq 0}} (x^2y^2 q)^{(2b+1)(2a+1)/2}(xy)^{2b+1}-
\sum_{\substack{a < 0\\ b<0}} (x^2y^2 q)^{(2b+1)(2a+1)/2}(xy)^{2b+1}
\right)
\prod_{a \geq 1}Z_{x,y}(X,x^{-1} y^{-1}(x^2 y^2 q)^a)^2 \\
=&
(x-1)^2 (y-1)^2 (xy+1)
\prod_{n>0} \frac{(1-(x^2 y^2 q)^{2n})^4}{(1-(x^2 y^2 q)^n)^2}
q^{\frac{1}{2}}
\prod_{n>0} \frac{(1-(x^2 y^2 q)^{2n} x^2y^2 )(1-(x^2 y^2 q)^{2n} (xy)^{-2})}
{(1-(x^2 y^2 q)^{2n-1}x^2 y^2)(1-(x^2 y^2 q)^{2n-1}(xy)^{-2})}\\
& \times \prod_{a \geq 1}Z_{x,y}(X,x^{-1} y^{-1}(x^2 y^2 q)^a)^2.
\end{split}
\end{equation}

\begin{NB2}
For $c_1=f+g$,
$\xi=-(b+\frac{1}{2})g+(a+\frac{1}{2})f$.
\end{NB2}

\begin{equation}
\begin{split}
& \sum_n e(M(2,g,n))q^n \\
=&
\frac{(x-1)(y-1)}{((xy)^2-1)(xy-1)}
\left(
(x^2 y-1)(xy^2-1)\prod_{a \geq 1} Z_{x,y}(X,(xy)^{-2}(x^2 y^2 q)^a)
Z_{x,y}(X,(x^2 y^2 q)^a) \right. \\
& \left. -
(x-1)(y-1)xy \prod_{a \geq 1}Z_{x,y}(X,x^{-1} y^{-1}(x^2 y^2 q)^a)^2
\right)+\\
& \frac{(x-1)^2(y-1)^2}{xy-1}\left( 
\sum_{\substack{a > 0\\ b \geq 0}} (x^2y^2 q)^{(2b+1)a}(xy)^{2b+1}-
\sum_{\substack{a < 0\\ b<0}} (x^2y^2 q)^{(2b+1)a}(xy)^{2b+1}
\right)
\prod_{a \geq 1}Z_{x,y}(X,x^{-1} y^{-1}(x^2 y^2 q)^a)^2.
\end{split}
\end{equation}
\begin{NB2}
For $c_1=g$,
$\xi=-(b+\frac{1}{2})g+af$, $\xi \cdot f=-(b+1/2)<0$.
\end{NB2}

 \begin{equation}
\begin{split}
& \sum_n e(M_{H_g}(2,0,n))q^n \\
=&
\frac{(x-1)(y-1)}{((xy)^2-1)(xy-1)}
\left(
(x^2 y-1)(xy^2-1)\prod_{a \geq 1} Z_{x,y}(X,(xy)^{-2}(x^2 y^2 q)^a)
Z_{x,y}(X,(x^2 y^2 q)^a) \right. \\
& \left. -
(x-1)(y-1) \prod_{a \geq 1}Z_{x,y}(X,x^{-1} y^{-1}(x^2 y^2 q)^a)^2
\right)-\frac{(x-1)^2(y-1)^2}{2(xy-1)}
\prod_{a \geq 1}Z_{x,y}(X,x^{-1} y^{-1}(x^2 y^2 q)^a)^2\\
=&
\frac{(x-1)(y-1)}{((xy)^2-1)(xy-1)}
\left(
(x^2 y-1)(xy^2-1)\prod_{a \geq 1} Z_{x,y}(X,(xy)^{-2}(x^2 y^2 q)^a)
Z_{x,y}(X,(x^2 y^2 q)^a) \right. \\
& \left. -
(x-1)(y-1)\frac{(xy)^2+1}{2} \prod_{a \geq 1}Z_{x,y}(X,x^{-1} y^{-1}(x^2 y^2 q)^a)^2
\right).
\end{split}
\end{equation}

Hence
\begin{equation}
\begin{split}
& \sum_n e(M(2,0,n))q^n \\
=&
\frac{(x-1)(y-1)}{((xy)^2-1)(xy-1)}
\left(
(x^2 y-1)(xy^2-1)\prod_{a \geq 1} Z_{x,y}(X,(xy)^{-2}(x^2 y^2 q)^a)
Z_{x,y}(X,(x^2 y^2 q)^a) \right. \\
& \left. -
(x-1)(y-1)\frac{(xy)^2+1}{2} \prod_{a \geq 1}Z_{x,y}(X,x^{-1} y^{-1}(x^2 y^2 q)^a)^2
\right)+\\
& \frac{(x-1)^2(y-1)^2}{xy-1}\left( 
\sum_{\substack{a > 0\\ b > 0}} (x^2y^2 q)^{2ba}(xy)^{2b}-
\sum_{\substack{a < 0\\ b<0}} (x^2y^2 q)^{2ba}(xy)^{2b}
\right)
\prod_{a \geq 1}Z_{x,y}(X,x^{-1} y^{-1}(x^2 y^2 q)^a)^2.
\end{split}
\end{equation}
\end{NB}

\end{document}